\newtheorem{thr}{Theorem}[section]
\newtheorem{prop}[thr]{Proposition}
\newtheorem{lem}[thr]{Lemma}
\newtheorem{cor}[thr]{Corollary}
\theoremstyle{remark}
\newtheorem{rmk}[thr]{Remark}
\theoremstyle{definition}
\newtheorem{df}[thr]{Definition}
\newtheorem{exe}[thr]{Example}
\theoremstyle{plain}
\newtheorem*{ass*}{\textbf{\textup{(A)}}}
\newenvironment{customthm}[1]
  {\innercustomthm}
  {\endinnercustomthm}
\newcommand{\C}{\mathcal{C}}
\newcommand{\B}{\mathcal{B}}
\newcommand{\R}{\mathbb{R}}
\renewcommand{\P}{\mathcal{P}}
\renewcommand{\d}{\, \textup{d}}
\newcommand{\m}{\mathfrak{m}}
\newcommand{\supp}{\textup{supp}}
\newcommand{\esssup}{\textup{ess sup}}
\title{Some reverse inequality in optimal mass transportation}
\author{Luigi De Pascale \\
Dipartimento di Matematica e Informatica``Ulisse Dini"\\
Università degli Studi di Firenze\\ \texttt{luigi.depascale@unifi.it}
\and Igor Pinheiro\\ Dipartimento di Matematica e Informatica ``Ulisse Dini" \\ Università degli Studi di Firenze \\ \texttt{igorvinicius.pereirapinheiro@unifi.it}}
\date{\today}
\begin{document}

\maketitle

\begin{abstract}
Controlling the $\mathcal W_\infty$ Wasserstein distance by the $\mathcal W_p$ Wasserstein distance is interesting both for theorical and numerical applications. A first paper on this problem was written several years ago \cite{BouJimRaj2007PAMS}. Some year later \cite{JylRaj2016JFA} framed it in the same inequality for more general costs which increase with the distance.
In this paper, we prove this type of inequality for optimal transport problems with pointwise cost which is a decreasing function of the distance. We show, in particular, that there is a general framework that encompasses all the cases above. \end{abstract}

\section{Description of the problem and of the results}
Let $(X,d)$ be a metric space and let $\mu, \nu \in \P (X)$ be two probability measures. Let $c:X \times X \to [0, +\infty]$ be a "pointwise transportation cost" function. Denote, as usual in this domain, by 
\[{\mathcal T} (\mu, \nu):=\{T:X \to X \ : \ T_\sharp \mu=\nu\},\]
where $T$ is a Borel map and $_\sharp$ denotes the push-forward of measures. The Monge or optimal transport problem related to the cost $c$ consists of 
\[\min_{\mathcal T (\mu,\nu)} \int_X c(x,T(x)) d\mu.\]

It is, by now, well known that the Monge's problem is difficult and may have no solutions so one resorts to its Kantorovich relaxation. We refer, for these questions, to any of the many books on optimal transport, for example \cite{villani-book, villani2021topics,santambrogio2015optimal}. The Kantorovich relaxation is 
\begin{equation}\label{prob-kant}
\C (\mu,\nu)=\min_{\lambda\in \Pi (\mu,\nu)} \int_{X\times X} c(x,y) d\lambda,
\end{equation}
where $\Pi (\mu,\nu)$ is the space of probability measures  $ \lambda$ on $X\times X$ such that  $\pi^1_\sharp \lambda =\mu$ and
$\pi^2_\sharp \lambda= \nu$. There is a natural identification between $T \in {\mathcal T (\mu,\nu)}$ and $(id \times T)_\sharp \mu \in \Pi(\mu,\nu).$
Existence of at least one optimizer for \eqref{prob-kant} holds under lower semi-continuity assumption on $c$ and some mild integrability conditions.
We will mostly focus on the Kantorovich version of the problem and we will point, when of interest, to the consequence for the minimizers of the Monge version, if any. We also introduce the $sup$, or supremal, version of the problem. 
\begin{equation}\label{prob-kant-infty}
\C_\infty (\mu,\nu):=\min_{\lambda\in \Pi (\mu,\nu)} \lambda-ess\sup c(x,y),
\end{equation}
which is, by now, well known in the literature. 
The monotonicity of the integral implies that 
\begin{equation}\label{direct-ineq}
\C (\mu,\nu) \leq \C_\infty (\mu,\nu).
\end{equation}
This paper will be focused on inequalities that {\it ``reverse"} \eqref{direct-ineq} in the sense that allows to control $\C_\infty$ by $\C$. We will focus on the case of repulsive costs and, in a subsection, we will describe the literature for the attractive case and we will add some original remarks on the topic. 
Since the costs we consider are repulsive, they make sense also in the case $\mu=\nu=\rho$.
\begin{df} Let $h:[0,\infty) \to [0,\infty]$ be a non-increasing function and $\rho \in \P(X)$. We define the following transport costs,
\begin{equation}
    \C(\rho) := \inf_{\lambda \in \Pi(\rho)} \int_{X\times X} h(d(x,y)) \d \lambda(x,y).
\end{equation}
and
\begin{equation}
    \C_\infty(\rho) : = \inf_{\lambda \in \Pi(\rho)} \lambda -\esssup \, h(d(x,y)).
\end{equation}
\end{df}

\begin{rmk} The model case for the definition above is the so called Coulomb interaction: The ambient space is $\R^3$ with the Euclidean metric and $h(t) = \frac{1}{t}$. We note that, in this case, $h$ is not defined at $t=0$, but instead we have $h(0^+):=\lim_{t\to 0^+} h(t) = +\infty$. The key instance in which $h(0^+) = +\infty$ plays a role is  the finiteness of $\C(\rho)$ as noted in \cite{ButtChamDePa2018AMO} and then studied in \cite{colombo2019continuity,Bin2020CJM} (see Prop. \ref{prop2} below). Regardless, our results hold either if $h(0^+) < \infty$ or $h(0^+) = \infty$. 
\end{rmk}

The type of reverse inequality that we will study is the following: there exists a function $\omega:[0,\infty) \to \R$ such that, $\omega(t) >0$  for $t>0$ and 
\begin{equation} \label{reverse-ineq}
\omega(\C_\infty (\rho)) \leq  \C(\rho) , 
\end{equation}
where, of course, $\omega$ should be explicit. 
This formulation of the problem was first given in \cite{JylRaj2016JFA}. The function $\omega$, should allow situations as the one of the next example and this will need some study, later on, in this paper. The next example illustrate the need of some caution in the study of this problem.

\begin{exe}\label{exe: ineq not holds} Let $h(t) = t^{-1}$ and $\rho_\epsilon \in \P(\R)$ given by $\rho_\epsilon(x) = \frac{1}{2} \delta_{0} + \epsilon \delta_{1} + (\frac{1}{2}  -\epsilon) \delta_{\frac{1}{\epsilon}}$. Then, $\C(\rho_\epsilon) = 2\epsilon(\frac{3}{2} - \epsilon) \to 0 $ as $\epsilon \to 0$, but $\C_\infty(\rho_\epsilon) = 1$.
\end{exe}

\subsection{The case of Wassertein distances}
When $c(x,y)=d^p (x,y)$ the cost   $(\min_{\lambda\in \Pi (\mu,\nu)} \int_X d^p(x,y) d\lambda)^{1/p}:= {\mathcal W_p}(\mu,\nu)$ 
defines the so called $p$-Wasserstein distance between $\mu$ and $\nu$. This is, certainly, one of the most significative cases.
In \cite{BouJimRaj2007PAMS} the authors prove that, if $\Omega$ is a bounded, connected, open subset of $\R^d$ with Lipschitz boundary 
then for every $\mu \in \P_{ac} (\Omega)$, every $\nu \in \P (\overline \Omega)$ and every $p>1$,
\[({\mathcal W_\infty}(\mu,\nu))^{p+d} \leq C_{p,d} (\Omega) \|1/f\|_\infty\ {\mathcal W_p}^p(\mu,\nu),\]
where the constant $C_{p,d} (\Omega)$ only depends on $\Omega$ and $f=\frac{d \mu}{dx}$ is the Radon-Nikodym derivative of $\mu$ with respect to the Lebesgue measure. 

So, if we consider the class of $\mu$ such that $\frac{d\mu}{dx}\geq a$ for a certain positive constant $a$ the inequality become 
\begin{equation}\label{eq: BJR}\frac{a ({\mathcal W_\infty}(\mu,\nu))^{p+d}}{C_{p,d}} \leq {\mathcal W_p}(\mu,\nu)^p.\end{equation}
This last inequality is a form of \eqref{reverse-ineq} for a specific attractive cost.
We will discuss later on the different parameters in this inequality.
The paper \cite{BouJimRaj2007PAMS} contains some open problems and several other results, among which an $L^\infty$ estimate for the $p$-optimal transport map. 

Inequality \eqref{eq: BJR} played a key role in several contexts ranging from regularity theory to PDEs. For instance, in \cite{SanDav2024Arx} the authors used \eqref{eq: BJR} to improve the $\mathcal{W}_2$-convergence rate between stationary state solutions of the Porous Media Equation; in \cite{SanTos2024Arch} the authors obtained uniform estimates on the $L^\infty$ norm of the gradient of the Kantorovich potential corresponding to a JKO step; in \cite{GolOtt2020Ann} the authors developed local versions of \eqref{eq: BJR}, extended in \cite{GutMon2022Calc} to more general costs, to obtain a purely variational approach to the partial regularity of the optimal transportation map which was later generalized in \cite{OttProRie2021Ann} to more generic settings than $\R^d$ and in \cite{FriRie2025Arx} to the Coulomb cost.

The exponent $p+d$ in estimate \eqref{eq: BJR}  is related to the Lipschitz boundary of $\Omega$, in fact, if $\Omega$ is not a Lipschitz domain, one could still have for a similar estimate, but with exponent $p+s$ and $s > d$.

\begin{exe} Let $\Omega = \{(x,y) \in \R^2 \, | \, \sqrt{|x|} \leq y \leq 1\}$ and $\mu \in \P(\Omega)$ be the uniform distribution over $\Omega.$  Fix $\alpha >$ and define $\nu = \mu\vert_{B^c((0,0),\alpha)} + \mu(B((0,0),\alpha)) \delta_{(0,0)}$. One can see that, $\mathcal W_\infty(\mu,\nu) = \alpha$.  Computing $\mathcal W_p(\mu,\nu)^p$ (up to first order) gives $\mathcal W_p (\mu,\nu)^p \approx C\alpha^{p+3}$, thus,
\[\frac{\mathcal W_p(\mu,\nu)^{p}}{\mathcal W_\infty (\mu,\nu)^{p+2}} \to 0 \text{ as } \alpha \to 0,\] on the other hand, there exists $C_{p,\Omega} >0$ such that

\begin{equation}\label{domain-dimension}
\frac{\mathcal W_\infty(\mu,\nu)^{p+3}}{C_{p,\Omega}} \leq \mathcal{W}_p(\mu,\nu)^{p}.
\end{equation}
    
\end{exe}

An in depth analysis was done a few years later in \cite{JylRaj2016JFA} in which the authors proved that: 

\begin{thr}[Th. 1.1, \cite{JylRaj2016JFA} ] Let $(X,d)$ be a complete metric space, $h:[0,+\infty) \to [0,+\infty)$ a non-decreasing function with $h(t)>0$ for all $t>0$ and let $\mu \in \P(X)$. Then, there exists a non-decreasing function $\omega:[0, +\infty) \to [0, +\infty)$ with $\omega(t)>0$ for all $t>0$ such that  
\[ \omega ({\mathcal W_\infty}(\mu,\nu)) \leq  \inf_{\lambda \in \Pi (\mu,\nu)} \int h(d(x,y)) d\lambda  \ \ \ \mbox{for all}\ \nu \in \P (spt (\mu)) \]
if and only if $spt(\mu)$ is compact and connected.
Moreover, in such case one can take $\omega(t)= \frac{1}{2}m(t/17)h(t/17)$ where
\begin{equation}\label{m-attractive}
    m(t):= \inf_{x \in spt (\mu)}\mu(B(x,t)).
\end{equation}
\end{thr}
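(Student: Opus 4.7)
The substantive direction is the sufficiency; necessity of compactness and connectedness follows from standard obstructions. I would attack sufficiency by the contrapositive: assuming some transport plan has small $h$-cost, produce another transport plan with small essential displacement, thereby controlling $\mathcal W_\infty(\mu,\nu)$.

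\textbf{Sufficiency setup.} Assume $K := \textup{spt}(\mu)$ is compact and connected, fix $\nu \in \P(K)$, and set $t := \mathcal W_\infty(\mu,\nu)$ and $r := t/17$. Compactness together with the definition of support makes $m(r)$ strictly positive: the map $x \mapsto \mu(B(x,r))$ is lower semicontinuous on the compact $K$ and strictly positive there, so its infimum is attained and positive. Since $h$ is non-decreasing, Chebyshev gives
\[
\int h(d(x,y))\, d\lambda \ \geq\ h(r)\,\lambda\bigl(\{(x,y) : d(x,y)\geq r\}\bigr),
\]
so it suffices to prove the mass estimate $\lambda(\{d(x,y)\geq r\}) \geq \tfrac{1}{2}m(r)$ for every $\lambda \in \Pi(\mu,\nu)$. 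I argue this by contradiction: if some $\lambda^\ast$ violates the bound, I will construct $\tilde\lambda \in \Pi(\mu,\nu)$ whose $\tilde\lambda$-essential supremum of $d(x,y)$ is strictly less than $t$, contradicting $\mathcal W_\infty(\mu,\nu) = t$.

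\textbf{Chaining construction.} The heart of the argument is a quantitative chaining property: because $K$ is compact and connected, any two points $x,y \in K$ can be joined by an $r$-chain $x = z_0, z_1, \ldots, z_N = y$ in $K$ with $d(z_i,z_{i+1}) < r$, and $N$ is bounded uniformly by the covering number of $K$ at scale $r$. For every bad pair $(x,y) \in \textup{spt}(\lambda^\ast)$ with $d(x,y)\geq r$ I pick such a chain and reroute the transported mass along it through short hops. The local reservoirs are the balls $B(z_i,r)$, each of $\mu$-mass at least $m(r)$: at each intermediate node $z_i$ I borrow $\mu$-mass to push forward to $z_{i+1}$ and compensate with the original coupling elsewhere in the chain. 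The budget condition $\lambda^\ast(\{d\geq r\}) < \tfrac{1}{2}m(r)$ is precisely what guarantees that all borrowings can be performed simultaneously without exhausting any local reservoir. Since every hop used by $\tilde\lambda$ has length strictly less than $r = t/17$, the essential supremum of $d$ under $\tilde\lambda$ stays strictly below $t$; the constant $17$ absorbs the bookkeeping of chain length, borrow-and-return offsets and a safety margin.

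\textbf{Necessity and main obstacle.} For the converse, if $K = A \sqcup B$ with $\textup{dist}(A,B) = \delta > 0$, then swapping a vanishing mass $\varepsilon \to 0$ between the two components yields $\nu_\varepsilon \in \P(K)$ with $\mathcal W_\infty(\mu,\nu_\varepsilon) \geq \delta$ but $\C(\mu,\nu_\varepsilon) \to 0$ (in the meaningful case $h(0)=0$), ruling out a strictly positive $\omega(\delta)$; non-compactness of $K$ is excluded analogously by transporting a vanishing mass over a diverging distance. The main obstacle I anticipate is making the chaining construction fully rigorous: one must define $\tilde\lambda$ as a genuine probability measure on $X\times X$ with exact marginals $\mu$ and $\nu$, supported in $\{d(x,y)<t\}$, and this forces a careful disintegration of $\lambda^\ast$ against the chains together with a precise accounting of how each local reservoir is partitioned among the chains that traverse it. The explicit constant $17$ is pinned down by this bookkeeping.
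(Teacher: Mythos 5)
The paper does not prove this theorem; it is imported as Theorem 1.1 of \cite{JylRaj2016JFA} and used only as a reference point, so there is no internal proof to compare you against. Still, a remark later in the paper records the one ingredient of the Jylhä--Rajala proof that the authors use, namely that $\lambda(\{d(x,y)\geq r\}) \geq \tfrac{1}{2}m(r)$ for every $\lambda\in\Pi(\mu,\nu)$ whenever $r<\mathcal W_\infty(\mu,\nu)/17$; your Chebyshev reduction to exactly that mass estimate is the right move, and your necessity examples are the standard ones.

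The chaining step, however, is where the entire content lies, and the sentence you use to close it is internally inconsistent. You write that ``every hop used by $\tilde\lambda$ has length strictly less than $r = t/17$,'' and conclude that the essential supremum of $d$ under $\tilde\lambda$ is ``strictly below $t$.'' If every coupling displacement in $\tilde\lambda$ were truly $<r$, you would have produced a coupling witnessing $\mathcal W_\infty(\mu,\nu)<r=t/17$, an absurdly stronger conclusion which the rerouting certainly cannot achieve; and if the displacements are not all $<r$, you have given no bound on them at all. You are conflating two different lengths: the steps $d(z_i,z_{i+1})<r$ of the geometric chain joining $x$ to $y$, and the displacement of a single borrowed-and-returned pair in the new coupling $\tilde\lambda$, which necessarily involves offsets of several $r$'s. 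The quantitative content of the lemma is precisely that this displacement is bounded by a universal multiple of $r$ that is $<17$; without tracking that multiple, the contradiction with $\mathcal W_\infty(\mu,\nu)=t$ does not materialize, and you explicitly leave this bookkeeping as ``the main obstacle.'' So the sketch is directionally right but does not yet constitute a proof. A secondary point: your necessity argument needs $h(0)=0$, which you note only parenthetically; if $h(0)>0$, the constant function $\omega\equiv h(0)$ is non-decreasing, positive for $t>0$, and bounds $\int h\,d\lambda$ from below unconditionally, so the ``only if'' direction fails as stated and the hypothesis should be made explicit.
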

\begin{rmk}
 The function $m$ in \eqref{m-attractive} explains how the assumption or regularity on $\partial \Omega$  and the lower bound on the density of $\mu$ effect the costant and the exponents in inequality \eqref{eq: BJR} and also explains the power $3$ (which is equal to $d+1$) in inequality \eqref{domain-dimension}.
Then a strategy of study could be first to prove an inequality as the one in the theorem above and then to find significant class of measures in which such inequality is more explicit. 
\end{rmk}

\section{Main theorem}

In what follows, we consider $(X,d)$ a complete and separable metric space, i.e. a Polish space, $\rho \in \P(X)$ a probability measure on $X$ and a continuous and decreasing function $h: [0,\infty) \to (0,\infty]$. For $N\geq 1$ fixed we also consider the cost function $c: X^N \to \R
_+\cup\{+\infty \}$ given by \[c(x_1,\dots, x_N) := \sum_{1\leq i < j \leq N} h(d (x_i,x_j)).\]
And also the functionals $\C, \C_\infty: \P(X^N) \to \R_+ \cup \{+\infty\} $ given by
    \[\C(\lambda)  = \int c(x_1,\cdots, x_N) \d \lambda (x_1,\cdots, x_N) \, \, \, \, \,  \, \, \C_{\infty}(\lambda) = \lambda -\esssup \, c(x_1,\cdots,x_N).\]
$\C(\rho)$ and $\C_\infty(\rho)$ will denote, respectivelly, the infimum of each functional over all couplings $\lambda$ in the set $\Pi(\rho) = \{\lambda \in \P(X^N) \, : \, \pi^{i}_\sharp \lambda  = \rho, 1\leq i \leq N\}$.

\begin{df} Let $\rho \in \P(X)$ and $\alpha >0$. The concentration of $\rho$ at level $\alpha$ is given by the quantity
\[\kappa_{\rho}(\alpha) = \sup_{x\in X} \rho(\overline{B}(x,\alpha)).\]
The pointwise concentration of $\rho$ is given by
\[\kappa(\rho) = \sup_{x\in X}\rho(\{x\}).\]
\end{df}

\begin{rmk}\label{pointwise-density}
The pointwise concentration, $\kappa(\rho)$, first appeared in \cite{DEPASCALE_2019}  where, in the study of optimal transport with Coulomb cost and $N$ marginal constraints, it was observed that if $\kappa(\rho) <C(N)$ then the integral transport cost is finite and the optimal transport plan has support at positive distance from the diagonal. For the same reason it was used in \cite{ButtChamDePa2018AMO}. In \cite{Bin2020CJM,colombo2019continuity} it was showed that $C(N)=\frac1N$ is the sharp costant. In particular, Th. 1.1 in \cite{colombo2019continuity} and Th.1.1 in \cite{Bin2020CJM} prove that the condition $\kappa(\rho) < \frac{1}{N}$ implies the existence of a coupling $\lambda \in \Pi(\rho)$ which has support a positive distance from the set $\{x\in X^N \, | \, x_i = x_j \text{ for some } i\neq j\}$.

In the borderline case $\kappa(\rho)=\frac{1}{N}$ nothing can be said in general and one can give examples in which both $\C(\rho)=+\infty$ and $\C_\infty (\rho)=+\infty$ or both are finite of only $\C_\infty (\rho)=+\infty$.

In our case, if $h(0^+) \coloneq \lim_{t\to0^+} h(t) < + \infty$ then the condition $\kappa(\rho) < \frac{1}{N}$ is not relevant for the finiteness of $\C_\infty(\rho)$ nor $\C(\rho)$. In the interest of generality, we prefer to allow $h(0^+) = +\infty$ and introduce the following assumption.
\end{rmk}

\begin{customthm}{\textbf{\textup{(A)}}}\label{Assumption} If $h(0^+) = +\infty$, then $\kappa(\rho) < \frac{1}{N}$.
\end{customthm}


Our main goal is to prove the following Theorem.

\begin{thr}\label{thr: main_thr_N} Let $\rho \in \P(X)$, $h:[0,+\infty) \to (0,+\infty]$ be a decreasing continuous function and assume \ref{Assumption} holds. Then,
\[\C(\rho) \geq h\left(2h^{-1}\left(\frac{\C_\infty(\rho)}{N(N-1)}\right)\right) \left[ \frac{N\kappa_\rho\left( h^{-1}\left( \frac{\C_\infty(\rho)}{N(N-1)}\right)\right) -1}{N(N-1)} \right] \]
\end{thr}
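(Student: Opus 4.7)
The plan is to localize at the scale $r:=h^{-1}\bigl(\C_\infty(\rho)/(N(N-1))\bigr)$ and exploit the concentration $\kappa_\rho(r)$: for any coupling $\lambda\in\Pi(\rho)$ the marginal constraints force a controllable $\lambda$-mass to place at least two coordinates inside a single closed ball of radius $r$, which by the triangle inequality drives a pairwise distance $\leq 2r$ and hence a contribution $\geq h(2r)$ to $c$. Assumption~\ref{Assumption} together with Remark~\ref{pointwise-density} ensure $\C_\infty(\rho)<+\infty$, so $r$ is well-defined from the continuous decreasing map $h:[0,\infty)\to(0,\infty]$; the convention $h^{-1}(a):=\sup\{t\geq 0:h(t)\geq a\}$ handles degenerate corner cases (e.g.\ $h$ constant on intervals, or $\C_\infty(\rho)/(N(N-1))$ sitting at an endpoint of the range of $h$).

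Given $\eta>0$ arbitrary, pick $x^*\in X$ with $\rho(\overline B(x^*,r))\geq\kappa_\rho(r)-\eta$, set $K:=\overline B(x^*,r)$, and define the count $T(x_1,\dots,x_N):=\#\{i:x_i\in K\}$. When $T\geq 2$ two distinct coordinates $x_i,x_j$ lie in $K$, so $d(x_i,x_j)\leq 2r$ and $h(d(x_i,x_j))\geq h(2r)$; since all other summands of $c$ are nonnegative one obtains the pointwise inequality
\[
c(x)\geq h(2r)\,\1_{\{T(x)\geq 2\}}.
\]
The marginal condition $\pi^i_\sharp\lambda=\rho$ gives $\mathbb{E}_\lambda[T]=N\rho(K)\geq N(\kappa_\rho(r)-\eta)$, and since $T\leq N$ and $T\1_{\{T\leq 1\}}\leq 1$ one has the Markov-type estimate $\mathbb{E}_\lambda[T]\leq 1+N\lambda(T\geq 2)$, whence
\[
\lambda(T\geq 2)\geq\frac{N(\kappa_\rho(r)-\eta)-1}{N}.
\]

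Integrating the pointwise bound against $\lambda$, sending $\eta\downarrow 0$, and taking the infimum over $\lambda\in\Pi(\rho)$,
\[
\C(\rho)\geq h(2r)\cdot\frac{N\kappa_\rho(r)-1}{N}\geq h(2r)\cdot\frac{N\kappa_\rho(r)-1}{N(N-1)}\qquad(N\geq 2),
\]
where the second inequality uses $N\kappa_\rho(r)\geq 1$; when $N\kappa_\rho(r)<1$ the claim is trivial since the right-hand side is $\leq 0\leq\C(\rho)$. The probabilistic core is a one-line Markov inequality applied to the indicator count $T$, so the truly delicate point is matching the scale $r$ to $\C_\infty(\rho)$ across the possible regimes of $h$ (in particular making sense of $h^{-1}$ when $h$ has flat pieces or when $M/(N(N-1))$ is at the boundary of the range of $h$); this is where Assumption~\ref{Assumption} does its work, ensuring $\C_\infty(\rho)<+\infty$ and $r>0$ so that the entire argument is not vacuous.
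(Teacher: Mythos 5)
Your proof is correct and follows the same overall strategy as the paper: fix the scale $r=h^{-1}\bigl(\C_\infty(\rho)/(N(N-1))\bigr)$, localize to a near-optimal ball, bound below the $\lambda$-mass of configurations with a close pair, integrate the cost on that event, and then send $\eta\to0$ and optimize over $\lambda$. The only genuine difference is where the paper's Theorem~\ref{thr: diagonal lower bound} is replaced. The paper proves that lemma by decomposing $N\rho(B)$ into the parts of $\{x_i\in B\}$ inside and outside $D_B$, bounding the first by a union bound over ordered pairs $(i,j)$, which produces the factor $N(N-1)$. Your Markov-type estimate $\mathbb{E}_\lambda[T]\leq 1+N\lambda(T\geq 2)$ applied to the count $T=\#\{i:x_i\in K\}$ is both shorter and strictly sharper for $N\geq 3$: it gives $\lambda(T\geq 2)\geq\frac{N\kappa_\rho(r)-1}{N}$, which dominates the paper's $\frac{N\kappa_\rho(r)-1}{N(N-1)}$. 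It is worth noting that a small refinement of your own step, replacing the crude bound $\mathbb{E}[T\1_{\{T\leq 1\}}]\leq 1$ by $\lambda(T=1)\leq 1-\lambda(T\geq 2)$, yields $\lambda(T\geq 2)\geq\frac{N\kappa_\rho(r)-1}{N-1}$, which is precisely the improved constant conjectured in the paper's remark following Theorem~\ref{thr: diagonal lower bound} and recovered there only for $N=2$ via the Fr\'echet bounds. Two minor caveats: the positivity of the bracket does not really need to be disposed of as a trivial case, since Corollary~\ref{cor: positivity} shows it always holds under Assumption~\ref{Assumption}; and the finiteness of $\C_\infty(\rho)$ when $h(0^+)=\infty$ is not literally Assumption~\ref{Assumption} itself but follows from it through Proposition~\ref{prop2} together with outer regularity of $\rho$, a step you invoke implicitly.
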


\begin{rmk} The function \[t\mapsto \m(t) \coloneq \frac{N\kappa_\rho(t) - 1}{N(N-1)}\] plays the same role as the function $m$ in \eqref{m-attractive}. In \cite{JylRaj2016JFA} the authors guarantee that the function $m$ is strictly positive by considering only the measures $\mu \in \P(X)$ with compact and connected support (see Lemma 2.1 in \cite{JylRaj2016JFA}). One may note that also $\m$ is not strictly positive. In fact, if $\kappa_\rho(t) \leq 1/N$, it will not be positive. A key step is to determine the level, as a function of $\C_\infty$, that one should compute $\m$ to guarantee positiveness. This is the content of Corollary \ref{cor: positivity}. 
\end{rmk}

\begin{rmk}  
The function $\m$, as in \cite{JylRaj2016JFA}, also depends on the marginal measure $\rho$. We will show in Section \ref{sec: Classes} that this dependence is only on how $\rho$ is distributed. Namely, for a class of measures that share certain common features, it is possible to prove that there exists a positive constant $C$, independent of each specific measure $\rho$, such that $\m(h^{-1}(\frac{\C_\infty(\rho)}{N(N-1)})) \geq C >0$.
\end{rmk}

\begin{rmk}The function \[t \mapsto h(2h^{-1}(t))\]is increasing. If $h(t) = t^{-1}$ then the formula above looks
\[\C(\rho) =\int_{X^N} \sum_{1\leq i < j <N} \frac{1}{d(x_i,x_j)} \d \lambda(x_1,\cdots, x_N) \geq \
\C_\infty(\rho) \frac{N\kappa_\rho\left(\left( \frac{N(N-1)}{\C_\infty(\rho)} \right)\right) -1}{2N^2(N-1)^2}.\]
\end{rmk}

Now we develop the necessary tools to prove Theorem \ref{thr: main_thr_N}.

\begin{df}\label{costly_sets} Let $(X,d)$ be a Polish space and $B,\beta > 0$. We define the following sets
\[ \B^{B} \coloneq \{(x_1,\cdots, x_N) \, : \, c(x_1,\cdots,x_N) > B\} \text{ and }\]
\[ \B_\beta \coloneq \{(x_1,\cdots x_N) \, : \, \min_{i\neq j} d(x_i,x_j) < \beta\}.\]
\end{df}

\begin{rmk}\label{rmk: bad set inclusions}
Note that, for the cost defined as \[\sum_{1\leq i < j \leq N} h(d(x_i,x_j))\] where $h:(0,\infty) \to (0,\infty)$ is decreasing, we have $\B_\beta \subset \B^{h(\beta)}$ since
\[
    \min_{i\neq j} d(x_i,x_j) < \beta \Rightarrow \max_{i\neq j} h(d(x_i,x_j)) > h(\beta) 
    \Rightarrow \sum_{1\leq i < j \leq N} h(d(x_i,x_j)) > h(\beta).
\]

Moreover, $\B ^{\binom{N}{2}h(\beta)} \subset \B_\beta$ since

\begin{align*}\sum_{1\leq i < j \leq N} h(d(x_i,x_j)) > \binom{N}{2}h(\beta) &\Rightarrow  \max_{i < j} h(d(x_i,x_j)) > h(\beta) \\ &\Rightarrow \min_{i<j} d(x_i,x_j) < \beta. \end{align*}
\end{rmk}

Given two subsets $A,B \subset X$ and a set $\{k_1,\dots,k_L\} \subseteq \{1,\dots, N\}$ we introduce the shorthand notation \[ A^{N-L} \times_{k_1\dots k_L} B = \prod^N_{k=1} Y_k \, \, \,\text{ where } \, \, \, Y_k = \begin{cases}
    B  \, \,\text{ if } k\in \{k_1,\dots, k_L\} \\
    A \, \, \text{ otherwise }
\end{cases}\] 

\begin{lem}\label{lem:positive_measure} Let $\rho \in \P(X)$. If $\kappa_{\rho}(\alpha) > \frac{1}{N}$, then for every $\lambda \in \Pi_N(\rho)$ we have $\lambda( \B_{2\alpha}) > 0$.    
\end{lem}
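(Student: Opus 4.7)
The plan is a short double-counting argument relying only on the marginal constraint on $\lambda$. First, from $\kappa_\rho(\alpha)>1/N$ I would extract, by definition of the supremum, a point $x_0\in X$ with $\rho(\overline B(x_0,\alpha))>1/N$.

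Second, I would integrate the ``occupation number'' against any $\lambda\in\Pi_N(\rho)$. Using $\pi^i_\sharp\lambda=\rho$ for each $i$,
\[
\int_{X^N}\sum_{i=1}^N \1_{\overline B(x_0,\alpha)}(x_i)\,d\lambda \;=\; N\,\rho(\overline B(x_0,\alpha)) \;>\; 1.
\]
Since the integrand is integer-valued, if it were $\le 1$ $\lambda$-a.e.\ the integral could not exceed $1$, a contradiction. Hence the set $A\coloneq\{(x_1,\dots,x_N)\,:\,\#\{i:x_i\in\overline B(x_0,\alpha)\}\ge 2\}$ has $\lambda(A)>0$. On $A$ there are indices $i\ne j$ with $x_i,x_j\in\overline B(x_0,\alpha)$, so the triangle inequality gives $\min_{i\ne j}d(x_i,x_j)\le 2\alpha$.

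Finally, to upgrade the non-strict $\le 2\alpha$ to the strict $<2\alpha$ required by Definition \ref{costly_sets}, I would exploit the strict gap $\rho(\overline B(x_0,\alpha))-1/N>0$ to find a slightly smaller radius $\alpha'<\alpha$ still satisfying $\rho(\overline B(x_0,\alpha'))>1/N$, perturbing the center $x_0$ if needed to avoid $\rho$-charged spheres around it. Running the same averaging argument with $\alpha'$ in place of $\alpha$ then produces a set of positive $\lambda$-measure on which $\min_{i\ne j}d(x_i,x_j)\le 2\alpha'<2\alpha$, which is contained in $\mathcal B_{2\alpha}$.

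The pigeonhole/averaging core of the argument is essentially forced by the marginal condition and offers no real resistance; the only delicate point I expect is the strict-versus-non-strict bookkeeping in the last step, which is why I would want to spend the strict inequality in the hypothesis precisely there.
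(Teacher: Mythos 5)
Your pigeonhole core is exactly the paper's argument, just not in contrapositive form. The paper assumes $\lambda(\B_{2\alpha})=0$, observes that the $N$ events $\{x_i\in B(z,\alpha),\ x_j\notin B(z,\alpha)\ \forall j\neq i\}$ are then pairwise disjoint and carry the full mass $\rho(B(z,\alpha))$ of the $i$-th marginal event, and sums to get $N\rho(B(z,\alpha))\le 1$; you instead integrate the occupation number $\sum_i\1_{\overline B(x_0,\alpha)}(x_i)$ against $\lambda$ and note that its integral $N\rho(\overline B(x_0,\alpha))>1$ forces two coordinates into the ball on a set of positive $\lambda$-measure. Up to contraposition these are the same three lines, and this part of your sketch is fine.

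The gap is in your final step, and I do not think it can be repaired the way you propose. Shrinking the radius gives, by continuity from below, $\rho(\overline B(x_0,\alpha'))\uparrow\rho(B(x_0,\alpha))$ as $\alpha'\uparrow\alpha$ — the \emph{open} ball, not $\rho(\overline B(x_0,\alpha))$ — so the trick fails exactly when $\rho$ puts its excess mass on the sphere $\{d(\cdot,x_0)=\alpha\}$; and perturbing the center does not obviously help. Concretely, with $N=2$ and $\rho=\tfrac12\delta_{-\alpha}+\tfrac12\delta_{\alpha}$ on $\R$ one has $\kappa_\rho(\alpha)=\rho([-\alpha,\alpha])=1>\tfrac12$, yet every closed ball of radius $<\alpha$ has $\rho$-mass $\le\tfrac12$ regardless of its center, and the coupling $\lambda=\tfrac12\delta_{(-\alpha,\alpha)}+\tfrac12\delta_{(\alpha,-\alpha)}$ has $\lambda(\B_{2\alpha})=0$ because the only distances realized are exactly $2\alpha$. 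So there is no shrink-and-perturb that closes the gap between the closed ball in the definition of $\kappa_\rho$ and the strict inequality in the definition of $\B_\beta$; this is a genuine open-versus-closed boundary mismatch, not mere bookkeeping. Note that the paper's own proof silently works with open balls $B(z,\alpha)$ throughout — which is precisely what makes the inclusion $X^{N-2}\times_{ij}B(z,\alpha)\subset\B_{2\alpha}$ valid — and thus actually establishes $\sup_z\rho(B(z,\alpha))\le 1/N$ rather than $\kappa_\rho(\alpha)\le 1/N$; the identification of the two is the same slippage. The clean way out is to run your occupation-number count with open balls from the start and read $\kappa_\rho$ with open balls (equivalently, read $\B_\beta$ with $\le$), in which case your first two paragraphs prove the statement directly and the third paragraph is simply unnecessary.
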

\begin{proof} Assume that $\lambda (\B_{2\alpha}) = 0$ and fix a point $z\in X$.  In particular, for any $i,j \in \{1,\dots, N\}$ with $i\neq j$, we have 
\[\lambda(X^{N-2} \times_{ij} B(z,\alpha)) = 0.\]
Thus $\rho(B(z,\alpha)) = \lambda(X^{N-1}\times_i B(z,\alpha)) = \lambda (B^c(z,\alpha)^{N-1} \times_i B(z,\alpha))$ where the family $\{B^c(z,\alpha)^{N-1} \times_i B(z,\alpha)\}_{1\leq i \leq N}$ is a family of disjoint sets. Therefore,
\[N\rho(B(z,\alpha)) = \sum^N_{i=1} \lambda (B^c(z,\alpha)^{N-1}\times_i B(z,\alpha)) = \lambda\left(\bigcup^N_{i=1} B^c(z,\alpha)^{N-1} \times_i B(z,\alpha)\right) \leq 1\]
and taking the supremum over all $z \in X$ proves the contrapositive of the Lemma.
\end{proof}

\begin{thr}\label{thr: diagonal lower bound}
    Given $\lambda \in \Pi_N(\rho)$ and $\alpha>0$, then
    \begin{equation}\label{diagonal_bound}\lambda (\B_{2\alpha}) \geq \frac{N\kappa_{\rho}(\alpha) - 1}{N(N-1)}\end{equation}
\end{thr}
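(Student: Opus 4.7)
The strategy is to fix an arbitrary point $z \in X$, derive a quantitative estimate relating $\rho(\overline{B}(z,\alpha))$ to $\lambda(\B_{2\alpha})$ by refining the argument of Lemma \ref{lem:positive_measure}, and then take the supremum over $z$. The inequality of Lemma \ref{lem:positive_measure} emerged from discarding the mass of $\lambda$ on configurations with two coordinates in $B(z,\alpha)$ (assumed to be zero); the task now is to keep track of exactly that discarded mass, show it is controlled by $\lambda(\B_{2\alpha})$, and count the contributions with the right combinatorial factor.

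Concretely, for each index $i \in \{1,\ldots,N\}$ the marginal condition gives
\[\rho(\overline{B}(z,\alpha)) = \lambda(X^{N-1} \times_i \overline{B}(z,\alpha)).\]
I would then split the event $\{x_i \in \overline{B}(z,\alpha)\}$ according to whether all other coordinates lie in $\overline{B}(z,\alpha)^c$ or whether at least one other coordinate lies in $\overline{B}(z,\alpha)$. The first piece is the set $E_i := \overline{B}(z,\alpha)^c{}^{N-1} \times_i \overline{B}(z,\alpha)$ considered already in Lemma \ref{lem:positive_measure}; the second piece is covered by the union $\bigcup_{j \neq i}(X^{N-2} \times_{ij} \overline{B}(z,\alpha))$. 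Hence
\[\rho(\overline{B}(z,\alpha)) \leq \lambda(E_i) + \sum_{j \neq i} \lambda(X^{N-2} \times_{ij} \overline{B}(z,\alpha)).\]

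Summing over $i$, the sets $\{E_i\}_{i=1}^N$ are pairwise disjoint, so $\sum_i \lambda(E_i) \leq 1$. For the double sum, two coordinates lying in a ball of radius $\alpha$ forces their mutual distance to be at most $2\alpha$, so each of the $N(N-1)$ sets $X^{N-2} \times_{ij} \overline{B}(z,\alpha)$ is contained in $\B_{2\alpha}$ (working, if necessary, with open balls of radius $\alpha$ to get the strict inequality in the definition of $\B_{2\alpha}$; the quantity $\kappa_\rho(\alpha)$ is unchanged up to the usual measure-theoretic technicalities). This yields
\[N\rho(\overline{B}(z,\alpha)) \leq 1 + N(N-1)\,\lambda(\B_{2\alpha}),\]
and rearranging and taking $\sup_{z\in X}$ gives the desired inequality.

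There is no substantial obstacle here: the argument is a clean elaboration of the proof of Lemma \ref{lem:positive_measure}. The only point requiring care is the open/closed ball bookkeeping needed to match the strict inequality in the definition of $\B_{2\alpha}$ with the closed balls used in the definition of $\kappa_\rho$; the combinatorial factor $N(N-1)$ in the denominator falls out naturally from counting the $N(N-1)$ ordered off-diagonal pairs $(i,j)$, which is precisely what the proof of Lemma \ref{lem:positive_measure} was throwing away.
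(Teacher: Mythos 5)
Your proof is correct and follows essentially the same route as the paper's: decompose $\{x_i \in B(z,\alpha)\}$ according to whether some other coordinate also lies in the ball, use disjointness to bound the ``lonely'' pieces by $1$, and union-bound the double-occupancy events over the $N(N-1)$ ordered pairs, each of which lies in $\B_{2\alpha}$. The paper merely gives a name ($D_B$) to the union you leave implicit; otherwise the argument, including the key combinatorial count, is the same.
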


\begin{proof}
    For fixed $\alpha >0$ and $x\in X$ denote $B \coloneq {B}(x,\alpha)$ and define $D_B \coloneq \{(x_1,\cdots, x_N) \, : \, x_i, x_j \in B, \text{ for some } i\neq j\}.$ Clearly, $D_B \subset \B_{2\alpha}$ and we have
    \[N\rho(B) = \sum^N_{i=1} \lambda (X^{N-1} \times_i B) = \sum^N_{i=1} \lambda ((X^{N-1} \times_i B ) \cap D_B) + \sum^N_{i=1} \lambda ((X^{N-1} \times_i B ) \setminus D_B).\]

    First we study the second summation. Given $(x_1,\dots, x_N) \in (X^{N-1} \times_i B ) \setminus D_B$, then $x_i \in B$ and $x_j \not \in B$ for all $j\neq i$. Hence, for each $i$,

    \[(X^{N-1} \times_i B ) \setminus D_B = (B^c)^{N-1} \times_i B\]moreover, for $i_1 \neq i_2$, $[(B^c)^{N-1} \times_{i_1} B] \cap  [(B^c)^{N-1} \times_{i_2} B] = \emptyset$ therefore

    \[\sum^N_{i=1} \lambda ((X^{N-1} \times_i B ) \setminus D_B) = \lambda \left( \bigcup^N_{i=1} (B^c)^{N-1} \times_i B \right) \leq 1.\]

    Now, we handle the first summation. Note that $(x_1,\cdots,x_N) \in  (X^{N-1} \times_i B ) \cap D_B $ means that $x_i\in B$ and $x_j \in B$ for some $j\neq i$. We have, for $i$ fixed,

    \begin{align*}\lambda \left( (X^{N-1}\times_i B)\cap D_B \right) &=  \lambda \left ([X^{N-1}\times_i B] \cap \bigcup_{j\neq i} [X^{N-1}\times_j B]  \right) \\ &\leq \sum_{j\neq i} \lambda ( [X^{N-1}\times_i B] \cap [X^{N-1}\times_j B]) \\ &\leq (N-1)\lambda (D_B)\end{align*}
for any $j\neq i$, where in the last inequality we used $[X^{N-1}\times_i B] \cap [X^{N-1}\times_j B] \subset D_B$. Finally,
    \[\sum^N_{i=i}\lambda \left( (X^{N-1}\times_i B)\cap D_B \right) \leq \sum^N_{i=1} (N-1)\lambda (D_B) = N(N-1) \lambda (D_B).\]
Putting everything together and recalling that $D_B \subset \B_{2\alpha}$ leads to 

    \[N\rho(B) \leq N(N-1)\lambda(\B_{2\alpha}) +1 \Rightarrow \lambda(\B_{2\alpha}) \geq \frac{N\rho(B) - 1}{N(N-1)}\]recalling that $B\coloneq {B}(x,\alpha)$, taking supremum over all $x\in X$ leads to the result.
\end{proof}

\begin{rmk} It is worth mentioning that inequality \eqref{diagonal_bound} will be the key step to prove Theorem \ref{thr: main_thr_N} in the same way as Lemma 2.2. in \cite{JylRaj2016JFA}. However, they are not obtained in the same way: In \cite{JylRaj2016JFA} the authors prove that $\lambda(\{d(x,y) \geq r\}) \geq \frac{m(r)}{2}$ for each $\lambda \in \Pi(\mu,\nu)$ as long as $r < \frac{W_\infty(\mu,\nu)}{17}$, while we prove that $\lambda(\B_{2\alpha}) \geq \m(\alpha)$ for every $\lambda \in \Pi(\rho)$ and $\alpha > 0$, then we determine a specific level $\tilde \alpha$ such that $\m(\tilde \alpha)>0$ (see Corollary \ref{cor: positivity}).
\end{rmk}

\begin{rmk} Observe that substituting $N=2$ in inequality \eqref{diagonal_bound} we get $\lambda (\B_{2\alpha}) \geq \frac{2\kappa_{\rho}(\alpha) - 1}{2}$. In the following section (see Lemma \ref{lemma_ineq}) we will show that, for $N=2$, it is possible to improve the estimate to $\lambda (B_{2\alpha}) \geq 2\kappa_{\rho}(\alpha) - 1$. Therefore, it seems reasonable to expect that the bounding function can still be improved to $\frac{N\kappa_{\rho}(\alpha) - 1}{N-1}$. In fact, in the following example we show that such improved bound may be reached if  $\kappa_{\rho} > 1/N$.
\end{rmk}

\begin{exe} Let $\alpha >0$, $p \in \big(\frac{1}{N}, 1\big]$ and $(x_1,\dots,x_N) \in \R^{Nd}$ such that $d(x_i, x_j) > 2\alpha$ for every $i\neq j$. Define \[\rho(x)  = p \delta_{x_1} + \frac{1-p}{N-1} \sum^N_{i=2} \delta_{x_i}.\] We denote by $[!N]$ the set of derangements (permutations without fixed points) of $N$ elements such that if $\sigma, \varsigma \in [!N]$, then $\sigma(i)\neq \varsigma(i)$ for every $i=1,\dots,N$. 

Consider the following plan \begin{equation} \lambda := \frac{1-p}{N-1}\Bigg [\delta_{x_1} \otimes \cdots \otimes \delta_{x_N} + \sum_{\sigma \in [!N]} \delta_{x_{\sigma(1)}}  \otimes \cdots \otimes \delta_{x_{\sigma(N)}} \Bigg ] + \frac{Np-1}{N-1} \delta_{x_1} \otimes \cdots \otimes \delta_{x_1}.\end{equation}  
Clearly,
\begin{equation}\label{eq: exe 2.7}\lambda (\B_{2\alpha}) = \frac{Np-1}{N-1} = \frac{N\kappa_{\rho}(\alpha)-1}{N-1} > \frac{N\kappa_{\rho}(\alpha)-1}{N(N-1)}.\end{equation}
\end{exe}

This is indeed a consequence of $\rho$ having atoms with more than $1/N$ mass. In the following example we provide a situation in which $\kappa(\rho) \leq 1 / N$ holds and one still gets the strict inequality in \eqref{diagonal_bound}, even for the improved lower bound.

\begin{exe} Consider $\rho \in \P(\R)$ given by $\rho(x) = \frac{1}{2}\chi_{[-1,1]}$. Let $\lambda = (\text{Id},T)_{\#}\rho$ where $T(x) = x+1$ for $x\leq 0$ and $T(x) = x-1$ for $x>0$. Then, for every $\alpha >0$ we have

\[\lambda(\B_{2\alpha}) = \begin{cases} 0 \text{ if } 0\leq\alpha < \frac{1}{2} \\
1  \text{ if } \frac{1}{2 } \leq \alpha \leq 1\end{cases}\]while
$2\kappa_{\rho}(\alpha) - 1 = 2\alpha -1 < 1$ for $0\leq \alpha <1$.
\end{exe}

The estimate of Theorem \ref{thr: main_thr_N} is meaningful if $\m\left(h^{-1}\left(\frac{\C_\infty(\rho)}{N(N-1)}\right)\right)$ is strictly positive.  That is, we need to guarantee that $\kappa_{\rho}\left(h^{-1}\left(\frac{\C_\infty(\rho)}{N(N-1)}\right)\right) > \frac{1}{N}$. Then, as said in the introduction, it would be useful to provide some class of probability measures such that $\m\left(h^{-1}\left(\frac{\C_\infty(\rho)}{N(N-1)}\right)\right) > C$, for some constant $C>0$. We start with the following Proposition, which was proven in \cite{colombo2019continuity} using the duality formulation for bounded costs.

\begin{prop}[Theorem 4.1 in \cite{colombo2019continuity}]\label{prop2} Let $\rho \in \P(X)$. If $\kappa_{\rho}(\alpha) \leq \frac{1}{N}$, then there exists $\lambda \in \Pi^N(\rho)$ such that $\lambda (\B_{\alpha}) = 0.$
\end{prop}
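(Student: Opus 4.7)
The plan is to prove the equivalent statement that $\inf_{\lambda \in \Pi(\rho)} \int c\, d\lambda = 0$, where $c(x_1,\dots,x_N) = \mathds{1}_{\B_\alpha}(x_1,\dots,x_N)$. Since $d$ is continuous and $\B_\alpha$ is defined by a strict inequality on the continuous map $(x_1,\dots,x_N) \mapsto \min_{i\neq j} d(x_i,x_j)$, the set $\B_\alpha$ is open, so $c$ is bounded and lower semicontinuous. By Kantorovich duality for bounded lsc multi-marginal costs,
\[
\min_{\lambda \in \Pi(\rho)} \int c\, d\lambda \;=\; \sup\Big\{\sum_{i=1}^N \int u_i\, d\rho \;:\; u_i \in C_b(X),\ \sum_i u_i(x_i) \leq c(x_1,\dots,x_N)\Big\}.
\]
Since both the cost $c$ and the marginal constraint are symmetric under $S_N$, averaging an admissible $N$-tuple $(u_{\sigma(1)},\dots,u_{\sigma(N)})$ over permutations yields $\bar u = \frac{1}{N}\sum_j u_j$ with $(\bar u,\dots,\bar u)$ admissible and the objective unchanged, so we may restrict to symmetric potentials $u_1=\cdots=u_N=u$.

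The task therefore reduces to showing that any $u \in C_b(X)$ with $\sum_{i=1}^N u(x_i) \leq \mathds{1}_{\B_\alpha}(x_1,\dots,x_N)$ satisfies $\int u\, d\rho \leq 0$ under the assumption $\kappa_\rho(\alpha) \leq 1/N$. Two consequences of the constraint are immediate: evaluating at the diagonal gives $Nu(x) \leq 1$, hence $u \leq 1/N$; and evaluating at any $N$-tuple $x_1,\dots,x_N$ with pairwise distances $\geq \alpha$ forces $\sum u(x_i) \leq 0$. From the latter, for any $t>0$ the superlevel set $\{u \geq t\}$ cannot contain $N$ points that are pairwise $\alpha$-separated (otherwise their $u$-values would sum to at least $Nt>0$), so a maximal $\alpha$-separated subset of $\{u \geq t\}$ has at most $N-1$ elements, and extending this to a cover yields $\{u \geq t\} \subset \bigcup_{k=1}^{N-1} \overline B(z_k,\alpha)$ for some $z_1,\dots,z_{N-1}$. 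Combined with $\kappa_\rho(\alpha)\leq 1/N$ this gives $\rho(\{u\geq t\}) \leq (N-1)/N$ for every $t>0$.

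The main obstacle is to upgrade this ``covering'' bound into the sharp conclusion $\int u\, d\rho \leq 0$: used naively together with $u\leq 1/N$ it only yields $\int u^+ d\rho \leq (N-1)/N^2$, which is not enough. Closing the gap requires genuinely exploiting the pointwise inequality $\sum u(x_i) \leq 0$ over $\alpha$-separated $N$-tuples, and not merely its consequences on level sets. The strategy is to couple each mass element of $\rho$ in $\{u>0\}$ with $N-1$ mass elements of $\rho$ in $\{u\leq 0\}$ via a measurable matching that pairwise respects the $\alpha$-separation; the existence of such a matching is precisely ensured by the Hall-type condition $\kappa_\rho(\alpha)\leq 1/N$, and integrating the constraint against this coupling produces the desired sign. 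This measurable-selection/fractional-matching step, carried out in \cite{colombo2019continuity}, simultaneously builds a feasible primal plan $\lambda \in \Pi(\rho)$ with $\lambda(\B_\alpha)=0$, matching the dual value zero and concluding the proof.
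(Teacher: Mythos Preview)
The paper does not supply its own proof of this proposition: it simply records that the result was proven in \cite{colombo2019continuity} ``using the duality formulation for bounded costs'' and moves on. So there is nothing in the paper to compare against beyond that one-line attribution, and your approach (reduce to the dual problem with cost $\mathds{1}_{\B_\alpha}$, symmetrise the potentials, analyse superlevel sets of $u$) is exactly the route the paper says the cited reference takes.

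That said, what you have written is a sketch rather than a proof. You correctly set up the duality, obtain the bounds $u\le 1/N$ and $\rho(\{u\ge t\})\le (N-1)/N$, and honestly flag that these alone do not force $\int u\,d\rho\le 0$. But then you resolve the ``main obstacle'' by pointing back to \cite{colombo2019continuity} for the matching/selection step. Since that is precisely the step containing all the difficulty, the argument as written is circular: you are citing the very paper whose theorem you are meant to be proving. If the intent is to give an independent proof, the matching construction (producing, from the Hall-type condition $\kappa_\rho(\alpha)\le 1/N$, a coupling supported on $\alpha$-separated tuples) must actually be carried out; if the intent is only to quote the result, then the duality preamble is unnecessary and a bare citation suffices, which is what the paper does.

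For comparison, note that in Section~\ref{2marg} the paper does give a self-contained proof in the case $N=2$ (Theorem~\ref{thr: existence plan}), bypassing duality entirely: one minimises $\int \chi_{\B_\alpha}\,d\lambda$, picks a point $(x,y)$ in $\supp\lambda\cap\B_\alpha$, uses $\kappa_\rho(\alpha)\le 1/2$ to find $(\tilde x,\tilde y)\in\supp\lambda$ with $\tilde x\notin B(y,\alpha)$ and $\tilde y\notin B(x,\alpha)$, and derives a contradiction from $c$-cyclical monotonicity. This is short and elementary but relies on a two-point swap, so it does not extend in an obvious way to $N\ge 3$; that is why the paper falls back on the cited result in the general case.
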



\begin{cor}\label{cor: positivity} Let $\rho \in \P(X)$, $h:[0,+\infty) \to (0,+\infty]$ a decreasing and continuous function, then 
\[\kappa_{\rho}\Bigg( h^{-1}\bigg(\frac{\C_\infty(\rho)}{N(N-1)}\bigg)\Bigg) > \frac{1}{N}\]
\end{cor}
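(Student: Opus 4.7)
The plan is to derive a direct contradiction from Proposition~\ref{prop2}. Write $\alpha := h^{-1}\!\left(\frac{\C_\infty(\rho)}{N(N-1)}\right)$, so that by continuity and monotonicity of $h$ we have $h(\alpha)=\C_\infty(\rho)/(N(N-1))$. Suppose for contradiction that $\kappa_\rho(\alpha)\leq 1/N$. Proposition~\ref{prop2} then furnishes a coupling $\lambda \in \Pi(\rho)$ with $\lambda(\B_\alpha)=0$, meaning that $\lambda$-a.e.\ $(x_1,\dots,x_N)\in X^N$ has every pairwise distance $d(x_i,x_j)\geq \alpha$.

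Since $h$ is decreasing, each of the $\binom{N}{2}$ pairwise terms obeys $h(d(x_i,x_j))\leq h(\alpha)$, and summing produces the $\lambda$-a.e.\ pointwise bound
\[
    c(x_1,\dots,x_N)\;\leq\;\binom{N}{2}\, h(\alpha)\;=\;\frac{\C_\infty(\rho)}{2}.
\]
Passing to the $\lambda$-essential supremum and then to the infimum over $\Pi(\rho)$ gives $\C_\infty(\rho)\leq \C_\infty(\rho)/2$, which is inconsistent with $0<\C_\infty(\rho)<+\infty$.

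Both finiteness and positivity of $\C_\infty(\rho)$ are ensured by Assumption~\ref{Assumption}. For finiteness: if $h(0^+)<+\infty$ then $c$ is globally bounded by $\binom{N}{2}h(0^+)$, while if $h(0^+)=+\infty$ then $\kappa(\rho)<1/N$, and applying Proposition~\ref{prop2} at some small $\alpha'>0$ with $\kappa_\rho(\alpha')\leq 1/N$ produces a coupling on which $c\leq\binom{N}{2}h(\alpha')<+\infty$ almost surely. Positivity holds because $h>0$ makes $c>0$ everywhere on $X^N$, so countable additivity forces $\lambda-\esssup\, c>0$ for each coupling $\lambda$.

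Apart from the bookkeeping check that $\C_\infty(\rho)/(N(N-1))$ lies in the range of $h$, so that $h^{-1}$ is well defined there, the argument is essentially a one-line consequence of Proposition~\ref{prop2} combined with the trivial bound $c\leq \binom{N}{2}h(\alpha)$ on $\{\min_{i\neq j} d(x_i,x_j)\geq \alpha\}$. No serious technical obstacle is expected; the only subtlety is ensuring we extract a \emph{strict} inequality $\kappa_\rho(\alpha)>1/N$, but this comes for free from the contradiction, which rules out the negation $\kappa_\rho(\alpha)\leq 1/N$.
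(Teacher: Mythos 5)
Your proposal is correct and follows the same argument as the paper: assume $\kappa_\rho(h^{-1}(\C_\infty(\rho)/(N(N-1)))) \leq 1/N$, invoke Proposition~\ref{prop2} to get a plan $\lambda$ with $\lambda(\B_\alpha)=0$, bound $c \leq \binom{N}{2}h(\alpha) = \C_\infty(\rho)/2$ on $\supp(\lambda)$, and contradict the definition of $\C_\infty(\rho)$. The paper leaves the finiteness and positivity of $\C_\infty(\rho)$ implicit (treating the infinite case only in the subsequent remark), whereas you flag them explicitly via Assumption~\ref{Assumption}; this is a harmless and arguably clarifying addition rather than a different route.
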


\begin{proof} If $\kappa_{\rho}\left( h^{-1}\big(\frac{\C_\infty(\rho)}{N(N-1)}\big)\right) \leq \frac{1}{N}$, then by Proposition \ref{prop2}, there exists $\lambda \in \Pi^N(\rho)$ such that $\lambda(\B_{r}) = 0$ for $r = h^{-1}\left( \frac{\C_\infty(\rho)}{N(N-1)} \right)$. This means that, for every $x\in \supp (\lambda)$

\[\min_{i\neq j} d(x_i,x_j) \geq  h^{-1}\bigg(\frac{\C_\infty(\rho)}{N(N-1)}\bigg)\]observing that $N(N-1) = 2\binom{N}{2}$ and $h$ non-increasing we have
\[\sum_{1\leq i<j\leq N} h(d(x_i,x_j)) \leq \binom{N}{2}\max_{i\neq j} h(d(x_i,x_j)) \leq \frac{\C_\infty(\rho)}{2} < \C_\infty(\rho)\]which leads to a contradiction as soon as one takes the essential supremum with respect to $\lambda$ on the left hand side. 
\end{proof}

\begin{rmk} Observe that if $h(0^+) = +\infty$ and $\C_\infty(\rho) = +\infty$, then $ \kappa_\rho(h^{-1}(+\infty)) = \kappa_\rho(0) = \kappa(\rho)$ and the Corollary above amounts to saying that $\kappa(\rho) > 1/N$.
\end{rmk}



Finally, we have all the ingredients for the proof of Theorem \ref{thr: main_thr_N}.

\begin{proof}[Proof of Theorem \ref{thr: main_thr_N}] Let $\lambda \in \Pi^N(\rho)$, we have

\begin{align*}
    \int_{X^N} \sum_{1\leq i < j <N} h(d(x_i,x_j)) &\d \lambda(x_1,\cdots, x_N) \\ &\geq \int_{\B_{2h^{-1}\left(\frac{\C_\infty(\rho)}{N(N-1)} \right)}} \sum_{1\leq i < j \leq N} h(d(x_i,x_j)) \d \lambda(x_1,\cdots, x_N) \\
    & \geq h\left(2h^{-1}\left(\frac{\C_\infty(\rho)}{N(N-1)}\right)\right)\lambda\left(\B_{2h^{-1}\left(\frac{\C_\infty(\rho)}{N(N-1)} \right)}\right) \\ 
    &\geq h\left(2h^{-1}\left(\frac{\C_\infty(\rho)}{N(N-1)}\right)\right) \left[ \frac{N\kappa_\rho\left( h^{-1}\left( \frac{\C_\infty(\rho)}{N(N-1)}\right)\right) -1}{N(N-1)} \right]
\end{align*}
\end{proof}

\textbf{\section{The case of two marginals}\label{2marg}}

In this section we restrict out attention to the case of two marginals and we provide simpler proofs and sharper conclusions of the results presented in the previous section. One of the key reasons for these improvements is that, for two marginals, the costly sets $\B^B$ and $\B_\beta$ in Definition \ref{costly_sets} coincide. Another reason is the, so called, Fréchet Bounds stated below which correspond to the estimate in \eqref{diagonal_bound}.

\begin{thr}[Fréchet Bounds]\label{thr: frechet bounds} Let $(X,d)$ be a polish space and $\rho \in \P(X)$. Given $\lambda \in \Pi(\rho)$, for any measurable set $A \subset X$ it holds that 
\[\max \{0, 2\rho(A) - 1\} \leq\lambda(A\times A) \leq \rho(A)\]
\end{thr}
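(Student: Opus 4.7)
The plan is to prove both inequalities by directly exploiting the marginal constraint $\pi^1_\sharp \lambda = \pi^2_\sharp \lambda = \rho$ together with elementary set-theoretic manipulations; this is a classical fact and the proof is short.

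For the upper bound, I would use the inclusion $A \times A \subseteq A \times X$. Monotonicity of $\lambda$ and the first-marginal condition then give
\[\lambda(A\times A) \;\leq\; \lambda(A\times X) \;=\; \pi^1_\sharp \lambda(A) \;=\; \rho(A).\]
One could symmetrically use $A\times A \subseteq X\times A$ together with the second-marginal condition.

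For the lower bound, the estimate $\lambda(A\times A) \geq 0$ is automatic. To obtain the nontrivial bound $\lambda(A\times A)\geq 2\rho(A)-1$, I would apply inclusion-exclusion to the measurable sets $A\times X$ and $X\times A$:
\[\lambda\bigl((A\times X)\cup (X\times A)\bigr) \;=\; \lambda(A\times X) + \lambda(X\times A) - \lambda\bigl((A\times X)\cap (X\times A)\bigr).\]
The intersection of these two sets is precisely $A\times A$, while each of $A\times X$ and $X\times A$ has $\lambda$-measure $\rho(A)$ by the two marginal conditions. Since the union is contained in $X\times X$, its $\lambda$-measure is at most $1$. Rearranging yields $\lambda(A\times A) \geq 2\rho(A) - 1$, and combining with the trivial bound gives $\lambda(A\times A) \geq \max\{0, 2\rho(A)-1\}$.

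I do not anticipate any real obstacle here: the argument is essentially two lines, with the only minor bookkeeping being the simultaneous use of both marginal identities and the identification $(A\times X)\cap(X\times A) = A\times A$. The statement is nontrivial mainly in its role as the two-marginal analogue of Theorem \ref{thr: diagonal lower bound}, with a generic measurable $A$ playing the role of a ball $B(x,\alpha)$ and $A\times A$ playing the role of the diagonal-type set $\B_{2\alpha}$; indeed, substituting $N=2$ and $A = B(x,\alpha)$ into the Fréchet lower bound recovers precisely the improved estimate $\lambda(\B_{2\alpha}) \geq 2\kappa_\rho(\alpha)-1$ anticipated in the remark following Theorem \ref{thr: diagonal lower bound}.
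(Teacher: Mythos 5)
Your proof is correct and follows essentially the same route as the paper's: the upper bound is identical, and your inclusion-exclusion derivation of the lower bound is just a reformulation of the paper's step $\lambda(A\times A)=\lambda(A\times X)-\lambda(A\times A^c)\geq\lambda(A\times X)-\lambda(X\times A^c)=2\rho(A)-1$. If anything, your version is marginally cleaner, since it handles the trivial regime $\rho(A)\leq 1/2$ in one stroke and avoids the paper's (unnecessary) appeal to Lemma~\ref{lem:positive_measure} for the case $\rho(A)>1/2$.
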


\begin{proof}
    The right-hand side inequality is easy: $\lambda (A\times A) \leq \lambda (A\times X) \leq \rho(A)$.
    For the left-hand side inequality, if $\rho(A) \leq \frac{1}{2}$ then $2\rho(A) - 1 \leq 0$, and since $\lambda $ is a positive measure, we still get the inequality. If $\rho(A) > \frac{1}{2}$ then, by Lemma \ref{lem:positive_measure}, $\lambda (A\times A) >0$ and
    \[\lambda (A\times A) = \lambda (A\times X) - \lambda (A\times A^c) \geq \lambda (A\times X) - \lambda (X \times A^c) = \rho(A) - \rho(A^c) = 2\rho( A) -1\]
\end{proof}
\begin{thr}\label{lemma_ineq} Let $(X,d)$ be a Polish space and $\rho \in \P(X)$. Given $\lambda\in \Pi(\rho)$, for every $\alpha > 0$ the following inequality holds
    \begin{equation}\label{thr: main_ineq} \lambda (\B_{2\alpha}) \geq 2\kappa_{\rho}(\alpha) - 1\end{equation}
\end{thr}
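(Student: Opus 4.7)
The plan is to apply the Fréchet Bounds of Theorem~\ref{thr: frechet bounds} directly to closed balls. Fix $x \in X$ and set $A = \overline{B}(x, \alpha)$; by the triangle inequality every $(x_1, x_2) \in A \times A$ satisfies $d(x_1, x_2) \leq 2\alpha$, so $A \times A$ sits inside $\B_{2\alpha}$ (to reconcile the strict inequality used in the definition of $\B_{2\alpha}$, one applies the argument at radii $\alpha - \varepsilon$ and lets $\varepsilon \to 0^+$, exploiting the monotone continuity of $\lambda$ along $\B_{2\alpha - 2\varepsilon} \uparrow \B_{2\alpha}$). The left Fréchet inequality then yields
\[
\lambda(\B_{2\alpha}) \;\geq\; \lambda(A \times A) \;\geq\; 2\rho(A) - 1 \;=\; 2\rho(\overline{B}(x,\alpha)) - 1.
\]
Taking the supremum over $x \in X$ gives $\lambda(\B_{2\alpha}) \geq 2\kappa_\rho(\alpha) - 1$; the conclusion is trivially true when $\kappa_\rho(\alpha) \leq 1/2$ since $\lambda$ is a positive measure.

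The entire improvement over the $N = 2$ specialization of Theorem~\ref{thr: diagonal lower bound}, which only produced the weaker bound $(2\kappa_\rho(\alpha) - 1)/2$, is captured by the Fréchet lower bound itself: it replaces the union-bound step $\sum_{i} \lambda((B^c)^{N-1} \times_i B) \leq 1$ used in the $N$-marginal proof by the sharper identity-plus-inequality $\lambda(A \times A) = \rho(A) - \lambda(A \times A^c) \geq \rho(A) - \rho(A^c) = 2\rho(A) - 1$, available precisely because $N = 2$ makes $\B_\beta$ coincide with $\{d < \beta\}$ and thus with the square-product geometry on $X \times X$. I do not foresee a genuine obstacle; the only technicality worth flagging is the routine $\varepsilon$-approximation needed to absorb the strict-versus-non-strict convention for $\B_{2\alpha}$, and everything else is a one-line application of Theorem~\ref{thr: frechet bounds} followed by a supremum over $x$.
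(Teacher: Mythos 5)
Your proof is correct and takes essentially the same route as the paper: both apply the Fréchet bounds (Theorem~\ref{thr: frechet bounds}) to a ball $A = \overline{B}(x,\alpha)$, use the inclusion $A \times A \subset \B_{2\alpha}$, and take the supremum over $x$; yours is if anything slightly cleaner, since the $\max\{0,\cdot\}$ in the Fréchet lower bound already absorbs the paper's explicit case split on whether $\kappa_\rho(\alpha) > 1/2$ and its appeal to Lemma~\ref{lem:positive_measure} for positivity. One small caveat on your $\varepsilon$-patch: $\overline B(x,\alpha-\varepsilon) \uparrow B(x,\alpha)$ (\emph{open} ball) as $\varepsilon\downarrow 0$, so the limit recovers $2\rho(B(x,\alpha))-1$ with the open ball rather than the closed ball appearing in the definition of $\kappa_\rho$ --- this closed-versus-open mismatch against the strict inequality in $\B_{2\alpha}$ is already latent in the paper's own proof and is harmless for the downstream estimates, but as written your approximation does not quite reach $\kappa_\rho(\alpha)$ itself.
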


\begin{proof}

Fix $\alpha >0$. If $\kappa_\rho(\alpha) \leq \frac{1}{2}$, then the inequality is trivially true, as  $\lambda$ is a positive measure.
Assume $\kappa_\rho(\alpha) > \frac{1}{2}$. Then, there exists $z\in X$ such that  $\rho(B(z,\alpha)) >\frac{1}{2}$ and by the proof of Lemma \ref{lem:positive_measure} we have that $\lambda (B(z,\alpha) \times B(z,\alpha)) >0$. By the Fréchet bounds it follows that 
\[\lambda(\B_{2\alpha}) \geq \lambda (B(z,\alpha)\times B(z,\alpha)) \geq 2\rho(B(z,\alpha)) -1\] and taking the supremum over all $z\in X$ we obtain the result.



\end{proof}

\begin{exe}[Strict inequality] Consider $0<\delta < \frac{1}{2}$ and $\rho \in \P(\R)$  given by $\rho = \frac{1}{3\delta}\chi_{(0,\delta)} + \frac{1}{3\delta}\chi_{(1,1+\delta)} +\frac{1}{3\delta}\chi_{(2,2+\delta)}$. For $\lambda : \frac{1}{3}\rho\vert_{(0,\delta)} \otimes \rho \vert_{(1,1+\delta)} + \frac{1}{3}\rho\vert_{(1,1+\delta)} \otimes \rho\vert_{(2,2+\delta)} + \frac{1}{3}\rho\vert_{(2,2+\delta)} \otimes \rho\vert_{(0,\delta)}$ and $2\alpha = 1+\delta$, we have

\[\lambda(\B_{1+\delta}) = \frac{2}{3} > 2\kappa\left(\rho, \frac{1+\delta}{2}\right) - 1 = \frac{1}{3}.\]

\end{exe}






The theorem below corresponds to Proposition \ref{prop2}, but we provide a different proof for the case of two marginals.

\begin{thr}\label{thr: existence plan} Let $(X,d)$ be a polish space and $\rho \in \P(X)$. If $\kappa_\rho (\alpha) \leq \frac{1}{2}$, then there exists $\lambda \in \Pi(\rho)$ such that $\lambda(\B_\alpha) = 0$.
\end{thr}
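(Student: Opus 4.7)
The plan is to invoke Strassen's theorem on the closed relation $R = \{(x,y) \in X \times X : d(x,y) \geq \alpha\}$. In the Polish setting this classical result (Strassen 1965, Kellerer 1984) guarantees the existence of some $\lambda \in \Pi(\rho)$ concentrated on $R$, i.e.\ with $\lambda(\B_\alpha) = 0$, as soon as the marginal condition
\[\rho(A) \leq \rho(R[A]) \qquad \text{for every closed } A \subset X\]
is satisfied, where $R[A] := \{y : \exists x \in A,\ d(x,y) \geq \alpha\}$. The whole proof therefore reduces to deriving this inequality from the hypothesis $\kappa_\rho(\alpha) \leq \tfrac12$.

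To check it I would rewrite the condition equivalently as $\rho(A) + \rho(R[A]^c) \leq 1$ and use the explicit description
\[R[A]^c = \{y : d(x,y) < \alpha \text{ for every } x \in A\} = \bigcap_{x \in A} B(x,\alpha).\]
The two degenerate cases $A = \emptyset$ and $R[A]^c = \emptyset$ give the inequality for free, so I would focus on the case in which both are nonempty and pick any $x_0 \in A$ and any $y_0 \in R[A]^c$. The defining property of $R[A]^c$ forces $A \subset B(y_0,\alpha) \subset \overline{B}(y_0,\alpha)$; symmetrically, for every $y \in R[A]^c$ the fixed $x_0 \in A$ yields $d(x_0,y) < \alpha$, hence $R[A]^c \subset \overline{B}(x_0,\alpha)$. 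Both sets are thus contained in closed balls of radius $\alpha$, so the hypothesis bounds each of their $\rho$-masses by $\kappa_\rho(\alpha) \leq \tfrac12$, and summing gives the required inequality.

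The main technical point is the careful invocation of Strassen's theorem: I need $R$ closed (which follows from continuity of $d$), $\rho$ to be Radon (automatic on a Polish space), and the marginal condition tested only on closed sets, all of which are in place. If one wishes to bypass Strassen entirely, an elementary alternative is to approximate $\rho$ weakly by empirical measures $\rho_n = \frac{1}{n}\sum_i \delta_{x_i^{(n)}}$, apply the finite Hall marriage theorem on the bipartite graph $\{d \geq \alpha\}$ (the marginal condition above translates directly into Hall's condition), and extract a weak cluster point, paying some attention to the open-versus-closed distinction at the boundary $\{d = \alpha\}$. Both routes share the same conceptual content: the bound $\kappa_\rho(\alpha) \leq \tfrac12$ is precisely what prevents any single $\alpha$-ball from acting as a bottleneck for the matching.
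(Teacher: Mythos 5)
Your argument is correct, but it takes a genuinely different route from the paper's. The paper approaches the problem variationally: since $\B_\alpha$ is open, $\chi_{\B_\alpha}$ is lower semicontinuous, so there is a minimizer $\lambda$ of $\int\chi_{\B_\alpha}\,\textup{d}\lambda$ over $\Pi(\rho)$; assuming $\lambda(\B_\alpha)>0$, the paper picks a close pair $(x,y)\in\supp(\lambda)\cap\B_\alpha$, uses $\kappa_\rho(\alpha)\leq\tfrac12$ to find a far-apart pair $(\tilde x,\tilde y)\in\supp(\lambda)$ with $\tilde y\notin B(x,\alpha)$ and $\tilde x\notin B(y,\alpha)$, and then contradicts $c$-cyclical monotonicity of the optimal plan's support: $1\leq c(x,y)+c(\tilde x,\tilde y)\leq c(x,\tilde y)+c(\tilde x,y)=0$. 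You instead invoke Strassen's marginal condition for the closed relation $R=\{d\geq\alpha\}$ and verify it directly by the neat observation that when both $A$ and $R[A]^c$ are nonempty, each sits inside a single ball of radius $\alpha$ (centered at any point of the other), so each has $\rho$-mass at most $\kappa_\rho(\alpha)\leq\tfrac12$. Your route is more transparent conceptually — it makes the Hall/bottleneck interpretation of $\kappa_\rho(\alpha)\leq\tfrac12$ explicit and admits the discretization you sketch — at the cost of importing Strassen's theorem as a black box; the paper's approach stays entirely inside the optimal-transport toolkit (existence of optimal plans for l.s.c.\ costs plus $c$-cyclical monotonicity) that it is already using throughout. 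One minor point you gloss over: $R[A]$ is only analytic, not obviously Borel, when $A$ is closed but not compact; this is harmless since analytic sets are universally measurable, but it is worth a word if you want the Strassen verification airtight.
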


\begin{proof} Fix $\alpha > 0$ and let us introduce the cost function $c(x,y) = \chi_{\B_\alpha}(x,y)$\footnote{$\chi_{\B_\alpha}$ denotes the characteristic function of the set $\B_\alpha$ which takes the value $1$ if $x\in \B_\alpha$ and $0$ otherwise.}. Since $\B_\alpha$ is open, the cost c is lower semicontinuous and we can guarantee the existence of a minimizer for \[\inf_{\lambda \in \Pi(\rho) } \int \chi_{\B_\alpha} (x,y) \d \lambda (x,y).\] Assume, for the sake of contradiction, that $\lambda (B_\alpha) > 0$ and let $(x,y) \in \supp (\lambda ) \cap \B_\alpha$.

From $\kappa_\rho(\alpha ) \leq \frac{1}{2}$ it follows that $\lambda (B^c(y,\alpha) \times B^c(x,\alpha) ) > 0$. In fact, 
\begin{align*}
    \lambda (B^c(y,\alpha) \times B^c(x,\alpha) ) &= 1 - \lambda ([B^c(y,\alpha) \times B^c(x,\alpha) ]^c) \\ & = 1- \lambda ( [B(y,\alpha) \times X] \cup [X \times B(x,\alpha)]) \\
    & > 1- \rho(B(y,\alpha)) - \rho(B(x,\alpha)\geq 0.
\end{align*} where the strictly inequality comes from $\lambda (B(y,\alpha) \times B(x,\alpha)) > 0 $ since $(x,y) \in B(y,\alpha) \times B(x,\alpha)$.

Let $(\tilde x, \tilde y) \in [B^c(y,\alpha) \times B^c(x,\alpha)] \, \cap \, \supp (\lambda)$. By the c-cyclicall monotonicity of the support of $\lambda$ (see \cite{santambrogio2015optimal}, Theorem 1.38) it follows that

\[ 1 \leq c(x,y) + c(\tilde x, \tilde y) \leq c(x,\tilde y) + c(\tilde x, y) = 0. \]
\end{proof}

\begin{cor}\label{large_concentration} Let $(X,d)$ be a polish space, $\rho \in \P(X)$ and $h:[0,+\infty) \to (0,+\infty]$  a decreasing continuous function. Then,
    $$\kappa_\rho\left(h^{-1}\left(\frac{\C_\infty(\rho)}{2} \right)\right) > \frac{1}{2}$$
\end{cor}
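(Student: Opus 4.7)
The plan is to replicate the contradiction argument used in Corollary \ref{cor: positivity}, but with Theorem \ref{thr: existence plan} replacing Proposition \ref{prop2}. For two marginals the former is available and directly matches the threshold $1/2$ appearing in the statement. First, I would suppose for contradiction that $\kappa_\rho(h^{-1}(\C_\infty(\rho)/2)) \leq 1/2$ and set $r := h^{-1}(\C_\infty(\rho)/2)$.

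Next, I would invoke Theorem \ref{thr: existence plan} at level $\alpha = r$ to obtain a coupling $\lambda \in \Pi(\rho)$ with $\lambda(\B_r) = 0$. This means that $d(x,y) \geq r$ for $\lambda$-almost every $(x,y)$, so by monotonicity of $h$,
\[
h(d(x,y)) \leq h(r) = \frac{\C_\infty(\rho)}{2}, \qquad \lambda\text{-a.e.}
\]
Taking the $\lambda$-essential supremum of the left-hand side would yield $\C_\infty(\rho) \leq \C_\infty(\rho)/2$, contradicting $\C_\infty(\rho) > 0$, a positivity which is automatic because $h$ takes values in $(0,+\infty]$.

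I do not expect a substantive obstacle here: the reasoning is simply a cleaner instance of the $N$-marginal argument, where the single term $h(d(x,y))$ replaces the sum of $\binom{N}{2}$ terms so that no bounding by the maximum is required. The only delicate point to check is that $\C_\infty(\rho)/2$ genuinely lies in the range of $h$, which is the case because $\C_\infty(\rho)$ is itself defined as a $\lambda$-essential supremum of values of the form $h(d(\cdot,\cdot))$ and $h$ is continuous and strictly monotone; so $h^{-1}$ applied at $\C_\infty(\rho)/2$ is meaningful.
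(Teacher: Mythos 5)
The proposal is correct and follows essentially the same route as the paper: assume $\kappa_\rho(h^{-1}(\C_\infty(\rho)/2)) \leq 1/2$, invoke Theorem \ref{thr: existence plan} to produce a coupling $\lambda \in \Pi(\rho)$ supported where $d(x,y)\geq h^{-1}(\C_\infty(\rho)/2)$, then conclude $\lambda\text{-}\esssup\, h(d(x,y)) \leq \C_\infty(\rho)/2$, contradicting the definition of $\C_\infty(\rho)$ as the infimum over couplings. Your additional comment about $\C_\infty(\rho)/2$ lying in the range of $h$ is a reasonable technical observation the paper leaves implicit, and it does not change the substance of the argument.
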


\begin{proof}.
    If $\kappa\left(h^{-1}(\frac{\C_\infty}{2})\right) \leq \frac{1}{2} $ then by Theorem \ref{thr: existence plan} there exists $\lambda \in \Pi(\rho)$ such that $\lambda(\B_\alpha) = 0 $ for $\alpha = h^{-1}(\frac{\C_\infty(\rho)}{2})$. Therefore, for all $(x,y)\in \supp( \lambda )$, \[d(x,y) \geq h^{-1}\left(\frac{\C_\infty(\rho)}{2} \right)\] and, since $h$ is decreasing,
\[\lambda - \esssup \, h(d(x,y)) \leq \frac{\C_\infty(\rho)}{2} \]
which is a contradiction.
\end{proof}

\begin{thr}\label{thr: main_thr} Let $\rho \in \P(X)$, $h:[0,+\infty) \to (0,+\infty]$ be a decreasing continuous function and assume \ref{Assumption} holds. Then,
\begin{equation}\label{eqn: main_ineq}\C(\rho) \geq h\left( 2 h^{-1}\left( \frac{\C_\infty(\rho)}{2}\right)\right) \left [ 2\kappa_\rho\left( h^{-1}\left( \frac{\C_\infty(\rho)}{2}\right)\right) -1 \right ].\end{equation} 
\end{thr}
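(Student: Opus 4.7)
The plan is to mimic the argument used in the proof of Theorem \ref{thr: main_thr_N}, but specialised to $N=2$ so as to profit from the sharper lower bound provided by Theorem \ref{lemma_ineq} (which replaces the factor $\tfrac{1}{N(N-1)}=\tfrac{1}{2}$ by $1$).

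First I would fix an arbitrary $\lambda\in\Pi(\rho)$ and set the scale $\alpha:=h^{-1}\!\left(\frac{\C_\infty(\rho)}{2}\right)$, which is well defined since $h$ is continuous and decreasing (if $h(0^+)=+\infty$ one invokes Assumption (A) together with Corollary \ref{large_concentration} to see that we are working in a range where $h^{-1}$ makes sense). Restricting the integral defining $\C(\rho)$ to the set $\B_{2\alpha}=\{(x,y):d(x,y)<2\alpha\}$ and exploiting that $h$ is non-increasing gives
\[
\int_{X\times X} h(d(x,y))\,\d\lambda \;\geq\; \int_{\B_{2\alpha}} h(d(x,y))\,\d\lambda \;\geq\; h(2\alpha)\,\lambda(\B_{2\alpha}).
\]

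Next I would apply Theorem \ref{lemma_ineq} to estimate $\lambda(\B_{2\alpha})\geq 2\kappa_\rho(\alpha)-1$; combined with the previous display this yields
\[
\int_{X\times X} h(d(x,y))\,\d\lambda \;\geq\; h\!\left(2h^{-1}\!\left(\tfrac{\C_\infty(\rho)}{2}\right)\right)\bigl[2\kappa_\rho(\alpha)-1\bigr].
\]
Taking the infimum over $\lambda\in\Pi(\rho)$ on the left-hand side produces \eqref{eqn: main_ineq}.

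The only subtlety — which corresponds to the main obstacle in Theorem \ref{thr: main_thr_N} — is that the bracket $[2\kappa_\rho(\alpha)-1]$ must be strictly positive for the estimate to carry information. This is exactly the content of Corollary \ref{large_concentration}, which ensures $\kappa_\rho(\alpha)>\tfrac12$ at the chosen level $\alpha=h^{-1}(\C_\infty(\rho)/2)$. With that fact in hand the proof is essentially a three-line chain of inequalities, and no further work is required beyond the tools already assembled in this section.
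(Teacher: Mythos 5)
Your proof is correct and follows essentially the same route as the paper: identify the scale $\alpha = h^{-1}(\C_\infty(\rho)/2)$, restrict the integral to $\B_{2\alpha}$ and use monotonicity of $h$ to pull out $h(2\alpha)$, lower-bound $\lambda(\B_{2\alpha})$ via Theorem \ref{lemma_ineq}, and invoke Corollary \ref{large_concentration} to guarantee the bracket is strictly positive. Nothing is missing.
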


\begin{proof}
    From Corollary  \ref{large_concentration} we have that $2\kappa_\rho\left(h^{-1}(\frac{\C_\infty(\rho)}{2})\right) - 1  > 0$. Hence, using $\alpha = h^{-1}(\frac{\C_\infty(\rho)}{2})$ in Lemma \ref{lemma_ineq} we have $\lambda\left(\B_{2h^{-1}(\frac{\C_\infty(\rho)}{2})}\right) \geq 2\kappa_\rho\left(h^{-1}(\frac{\C_\infty(\rho)}{2})\right) -1$ for every $\lambda \in \Pi(\rho)$. Therefore, proceeding in the same way as in Theorem \ref{thr: main_thr_N} we have 

    \begin{align*}\int_{\R^{2d}} h(d(x,y)) \d \lambda \geq h\left(2h^{-1}\left(\frac{\C_\infty(\rho)}{2}\right)\right) \left[2\kappa_\rho\left(h^{-1}\left(\frac{\C_\infty(\rho)}{2} \right)\right)-1\right] \end{align*}
\end{proof}

\begin{rmk} In the case of $X=\R^d$, $\rho \in \P(\R^d)$ and $h(t) = \frac{1}{t}$ the inequality \eqref{eqn: main_ineq} becomes 

\begin{equation}
    \inf_{\lambda \in \Pi(\rho)} \int_{\R^{2d}} \frac{1}{|x-y|} \d \lambda (x,y) \geq \frac{\C_\infty(\rho)}{4} \Big[2\kappa_\rho \Big(\frac{2}{\C_\infty(\rho)}\Big) -1\Big]
\end{equation}

\end{rmk}

In the example below we return to Example \ref{exe: ineq not holds} to illustrate the role of the function $\m$.

\begin{exe}[Example \ref{exe: ineq not holds} BIS]\label{Example 1.3 BIS} Let $\rho_\epsilon \in \P(\R)$ given by $\rho_\epsilon(x) = \frac{1}{2} \delta_{0} + \epsilon \delta_{1} + (\frac{1}{2}  -\epsilon) \delta_{\frac{1}{\epsilon}}$. Then, $\C(\rho_\epsilon) = 2\epsilon(\frac{3}{2} - \epsilon)$, $\C_\infty(\rho_\epsilon) = 1$ and $2\kappa_{\rho_\epsilon}(\frac{4}{\C_\infty(\rho_\epsilon)}) -1= 2(\frac{1}{2} + \epsilon) -1 = 2\epsilon$. Then, for $\epsilon >0$ small enough

\[\C(\rho_\epsilon) = 2\epsilon(\frac{3}{2} - \epsilon) > \frac{2\epsilon}{8} = \frac{\C_\infty(\rho_\epsilon)}{8} \left[2\kappa \left( \frac{4}{\C_\infty(\rho_\epsilon)}\right) -1 \right]\]
\end{exe}



\section{Classes of probability measures}\label{sec: Classes}

The function  $t \mapsto \m(t) = \frac{N\kappa_\rho (h^{-1}(\frac{t}{4})) - 1}{N(N-1)}$ is decreasing and plays a fundamental role in Theorem \ref{thr: main_thr_N}. Examples \ref{exe: ineq not holds} and \ref{Example 1.3 BIS} show that it is not possible to obtain a constant $C > 0$ (independent of $\rho$) such that \begin{equation}\label{ineq: 16}C \, \C_\infty(\rho) \leq \C(\rho). \end{equation}

Throughout this section we provide two classes of probability measures in which it is possible to control $\m$ from below and then to obtain a constant $C>0$ depending only on the class, and not on the particular probability measure, which gives \eqref{ineq: 16}.

The key idea is as follows. Observe that, for any $\rho \in \P(X)$

\begin{equation}
    \C_\infty(\rho) = \min_{\lambda \in \Pi(\rho)} h \big(\sup\left\{ \alpha \, : \,  \lambda (\B_\alpha) = 0 \right\} \big)
\end{equation} thus, for any $\lambda \in \Pi(\rho)$, 

\begin{equation}\label{4. ineq}
     h^{-1}(\C_\infty(\rho)) \geq \sup \{ \alpha \, : \, \lambda (\B_\alpha) = 0\}.
\end{equation}  If, for $\tilde \alpha > 0$ fixed, $\mathfrak{R}_{\tilde \alpha} \subset \P(X)$ is a collection of measures such that, for each $\rho \in \mathfrak{R}_{\tilde \alpha}$ there exists $\lambda \in \Pi(\rho)$ such that $\lambda (\B_{\tilde \alpha}) = 0$, it follows from \eqref{4. ineq} that $h^{-1}(\C_\infty(\rho)) \geq \tilde \alpha > 0$. If one can guarantee that, for such $\tilde \alpha$ the quantity $\mathfrak{m}(\tilde \alpha)$ is strictly positive, then the inequality in Theorem \ref{thr: main_thr_N} becomes, for $\rho \in \mathfrak{R}_{\tilde \alpha}$,

\begin{equation}
    \C(\rho) \geq  C_{\tilde \alpha} \,h\left(2h^{-1}\left(\frac{\C_\infty(\rho)^{-1}}{N(N-1)}\right)\right).
\end{equation} where $C_{\tilde \alpha} = \mathfrak{m}(\tilde \alpha) $.

In this section we will provide two classes $\mathfrak{R}_{\tilde \alpha}$: The first will be a collection of unimodal isotropic probability measures on $\R^d$, and the second will be the collection of discrete measures. To avoid obscuring the meaning of the results by notational heaviness, we will restrict ourselves to the case of two marginals and to $h(s) = s^{-1}$, but the results can be naturally posed for an arbitrary number marginals and more general decreasing functions $h$.

We start with a generic condition that allows one to obtain the constant $C_{\tilde \alpha}$.

\begin{thr}\label{thr: class tail control}
    Let $\rho \in \P(X)$, 
     $h:(0,\infty) \to (0,\infty)$ be given by $h(s) = s^{-1}$. Define $r_\rho = \sup \{r >0 \,  | \, \kappa_\rho(r) \leq 1/2\}$. Let $0< \delta< \frac12$. Then there exists $C=C(\delta)>0$ (not depending on $\rho$)  such that, if 
    \begin{equation}\label{smalltaiildouble}
        \inf_{x\in \R^d} \rho(B^c(x,2r_\rho)) \leq \delta < \frac{1}{2},
        \end{equation}
    then \[   \C_\infty(\rho) \leq C\, \C(\rho).\]
\end{thr}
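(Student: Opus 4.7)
The plan is to combine the diagonal mass estimate of Theorem~\ref{lemma_ineq} evaluated at a level dictated by the tail hypothesis with the upper bound on $\C_\infty(\rho)$ coming from Corollary~\ref{large_concentration}. I would deliberately avoid invoking Theorem~\ref{thr: main_thr} directly, because that route would demand $\kappa_\rho(2/\C_\infty(\rho))\geq 1-\delta$, and the tail hypothesis only controls $\kappa_\rho$ at level $2r_\rho$, not at level $2/\C_\infty(\rho)$.

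The first step is to translate the tail condition into a lower bound on $\kappa_\rho(2r_\rho)$. For each $x$ one has $\rho(\overline{B}(x,2r_\rho))\geq 1-\rho(B^c(x,2r_\rho))$, so taking the supremum over $x$ and using \eqref{smalltaiildouble} yields $\kappa_\rho(2r_\rho)\geq 1-\delta$.

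The second step is to apply Theorem~\ref{lemma_ineq} with $\alpha=2r_\rho$: for every $\lambda\in\Pi(\rho)$,
\[\lambda(\B_{4r_\rho})\geq 2\kappa_\rho(2r_\rho)-1\geq 1-2\delta>0.\]
On $\B_{4r_\rho}$ one has $d(x,y)<4r_\rho$, hence $h(d(x,y))>1/(4r_\rho)$, and a Markov-type estimate gives
\[\int h(d(x,y))\d\lambda\geq \frac{\lambda(\B_{4r_\rho})}{4r_\rho}\geq \frac{1-2\delta}{4r_\rho}.\]
Taking the infimum over $\lambda$ produces $\C(\rho)\geq (1-2\delta)/(4r_\rho)$.

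The final step is to link $r_\rho$ to $\C_\infty(\rho)$ through Corollary~\ref{large_concentration}: it yields $\kappa_\rho(2/\C_\infty(\rho))>1/2$, and since $r_\rho$ is the supremum of radii $r$ with $\kappa_\rho(r)\leq 1/2$, the monotonicity of $\kappa_\rho$ forces $2/\C_\infty(\rho)\geq r_\rho$. Substituting $1/r_\rho\geq \C_\infty(\rho)/2$ into the previous bound gives $\C(\rho)\geq (1-2\delta)\C_\infty(\rho)/8$, i.e.\ the claim with $C(\delta)=8/(1-2\delta)$. The only delicate point I anticipate is verifying rigorously the implication $\kappa_\rho(2/\C_\infty(\rho))>1/2 \Rightarrow 2/\C_\infty(\rho)\geq r_\rho$, which is a short argument by contradiction using the monotonicity of $\kappa_\rho$; modulo that, everything reduces to standard Markov-type manipulations.
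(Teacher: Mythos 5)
Your proof is correct, but it follows a route that differs from the paper's and pays a factor of $2$ in the constant. The paper plugs directly into Theorem~\ref{thr: main_thr}: with $h(s)=s^{-1}$ that theorem reads $\C(\rho)\geq \tfrac{\C_\infty(\rho)}{4}\bigl[2\kappa_\rho(2/\C_\infty(\rho))-1\bigr]$, and the ``demand'' you were worried about, namely a lower bound on $\kappa_\rho$ at the level $2/\C_\infty(\rho)$ rather than $2r_\rho$, is exactly what the paper supplies first: Theorem~\ref{thr: existence plan} applied with $\alpha=r_\rho$ gives a plan avoiding $\B_{r_\rho}$, hence $\C_\infty(\rho)\leq 1/r_\rho$, i.e.\ $2/\C_\infty(\rho)\geq 2r_\rho$, and then monotonicity of $\kappa_\rho$ yields $\kappa_\rho(2/\C_\infty(\rho))\geq\kappa_\rho(2r_\rho)\geq 1-\delta$. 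So your stated reason for sidestepping Theorem~\ref{thr: main_thr} is a misapprehension, not a genuine obstruction. What you do instead is also sound: you apply Theorem~\ref{lemma_ineq} at the $\rho$-intrinsic level $\alpha=2r_\rho$, run the elementary Markov estimate on $\B_{4r_\rho}$, and only at the end convert $1/r_\rho$ into $\C_\infty(\rho)$ through Corollary~\ref{large_concentration}. The price is that Corollary~\ref{large_concentration} only gives $\kappa_\rho(2/\C_\infty(\rho))>1/2$ and hence $2/\C_\infty(\rho)\geq r_\rho$, half the estimate $2/\C_\infty(\rho)\geq 2r_\rho$ that Theorem~\ref{thr: existence plan} provides directly; this is precisely where your final constant $8/(1-2\delta)$ loses a factor of $2$ against the paper's $4/(1-2\delta)$. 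Your argument has the modest advantage of not relying on the full statement of Theorem~\ref{thr: main_thr} (it uses only Theorem~\ref{lemma_ineq} plus a Markov bound), but since Corollary~\ref{large_concentration} is itself proved via Theorem~\ref{thr: existence plan}, you are not actually using fewer ingredients, just combining them in a less efficient order.
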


\begin{proof} By Theorem \ref{thr: existence plan} with $\alpha = r_\rho$ we have $\C_\infty(\rho) \leq \frac{1}{r_\rho}$, from which we derive \begin{equation}\label{eq: thr 4.1}\frac{2}{\C_\infty(\rho)}\geq 2r_\rho \end{equation}

From $\inf_{x\in \R^d} \rho(B^c(x,2r_\rho)) \leq \delta$ it follows that $\kappa_\rho(2r_\rho) \geq 1-\delta>\frac{1}{2}$. From \eqref{eq: thr 4.1} we obtain
\[2\kappa_\rho\Big(\frac{2}{\C_\infty}\Big) - 1 \geq 2\kappa_\rho(2r_\rho) - 1 > 1-2\delta,\]
which implies that $\mathfrak{m}(\frac{2}{\C_\infty(\rho)})>1-2\delta$ where $\mathfrak{m}$ is the function in Theorem \ref{thr: main_thr}. We conclude by taking $C:= \frac{1-2\delta}{4}>0$.
\end{proof}


\subsection{Unimodal isotropic probability measures}





Several classes of measures satisfy the assumptions of Theorem \ref{thr: class tail control}. We will analyze the case of unimodal isotropic probability distributions which includes the Cauchy distribution, the Gaussian distribution, the Laplace distribution and the Logistic distribution, to cite a few. We will also show that for absolutely continuous probability measures which are unimodal and isotropic, the quantity $\C_\infty$ can be fully characterized as inverse of the diameter of ball (Proposition \ref{prop: characterization}) which allows us to give a better expression of $\m$. 

\begin{lem}\label{lemma_characterization} Let $\rho \in \P(\R^d)$, then
\[\C_\infty(\rho) \geq \sup_{\alpha}\left\{ \frac{1}{2\alpha} \, : \, \kappa_\rho(\alpha) -  \frac{1}{2} > 0 \right \}\]    
\end{lem}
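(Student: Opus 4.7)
The plan is to fix an admissible radius $\alpha>0$ (meaning $\kappa_\rho(\alpha)>\tfrac12$), show that this forces $\C_\infty(\rho)\ge \tfrac{1}{2\alpha}$, and then take the supremum over such $\alpha$. The heart of the argument is to translate the concentration hypothesis into a lower bound on the essential supremum of $h(d(x,y))=\tfrac{1}{d(x,y)}$ that holds under every transport plan.

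First I would invoke Lemma \ref{lem:positive_measure} in the case $N=2$: since $\kappa_\rho(\alpha)>\tfrac12=\tfrac1N$, for every coupling $\lambda\in\Pi(\rho)$ we have $\lambda(\B_{2\alpha})>0$. Recalling Definition \ref{costly_sets}, in the two-marginal case $\B_{2\alpha}=\{(x,y)\in X\times X:d(x,y)<2\alpha\}$. Consequently, on $\B_{2\alpha}$ the integrand satisfies $h(d(x,y))=\tfrac{1}{d(x,y)}>\tfrac{1}{2\alpha}$, so
\[
\lambda\!\left(\left\{(x,y):h(d(x,y))>\tfrac{1}{2\alpha}\right\}\right)\ \ge\ \lambda(\B_{2\alpha})\ >\ 0.
\]

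Because the superlevel set $\{h(d(x,y))>\tfrac{1}{2\alpha}\}$ has strictly positive $\lambda$-mass, the standard definition $\lambda\text{-}\esssup f=\inf\{M:\lambda(f>M)=0\}$ gives $\lambda\text{-}\esssup h(d(x,y))\ge \tfrac{1}{2\alpha}$. Since this bound is uniform in $\lambda\in\Pi(\rho)$, taking the infimum produces $\C_\infty(\rho)\ge \tfrac{1}{2\alpha}$, and finally taking the supremum over all $\alpha$ with $\kappa_\rho(\alpha)-\tfrac12>0$ yields the stated inequality.

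I do not anticipate a genuine obstacle here: the lemma is essentially a repackaging of Lemma \ref{lem:positive_measure} for the specific cost $h(s)=s^{-1}$, combined with the elementary remark that positive $\lambda$-measure of a superlevel set lower-bounds the essential supremum. The only point requiring a line of care is to write the superlevel inequality with strict inequality (so that we genuinely reach $\tfrac{1}{2\alpha}$ rather than merely an arbitrarily close value), which is automatic because $d(x,y)<2\alpha$ on $\B_{2\alpha}$ by construction.
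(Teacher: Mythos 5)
Your argument is essentially the paper's own proof. Both proceed by fixing $\alpha$ with $\kappa_\rho(\alpha)>\tfrac12$, translating that via the argument of Lemma~\ref{lem:positive_measure} into positive $\lambda$-mass on a set where $d(x,y)$ is at most $2\alpha$, and then concluding $\lambda\text{-}\esssup\,\frac{1}{d(x,y)}\ge\frac{1}{2\alpha}$ uniformly in $\lambda\in\Pi(\rho)$ before taking the infimum over plans and the supremum over $\alpha$.

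The one nontrivial difference, worth flagging: you cite the \emph{statement} of Lemma~\ref{lem:positive_measure} to get $\lambda(\B_{2\alpha})>0$, i.e.\ positive mass on the \emph{open} region $\{d(x,y)<2\alpha\}$. The paper instead invokes ``the proof of'' Lemma~\ref{lem:positive_measure}, working directly with the closed ball $\bar B(z,\alpha)$ from the definition of $\kappa_\rho$, and concludes $\lambda(\bar B(z,\alpha)\times\bar B(z,\alpha))>0$, which only yields a pair in $\supp(\lambda)$ with $d(x,y)\le 2\alpha$. The non-strict version is what one genuinely gets from the Fr\'echet-type argument with closed balls (consider $\rho=\frac12\delta_0+\frac12\delta_{2\alpha}$ and the anti-diagonal plan: $\kappa_\rho(\alpha)=1$ but $\lambda(\B_{2\alpha})=0$), so the paper's formulation sidesteps a strict-vs.-non-strict mismatch lurking between the closed balls in $\kappa_\rho$ and the open set $\B_{2\alpha}$. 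This does not affect the conclusion: $\lambda\big(\{d(x,y)\le 2\alpha\}\big)>0$ already forces $\lambda\text{-}\esssup\,\frac{1}{d}\ge\frac{1}{2\alpha}$, by the same superlevel-set reasoning you give. Your final remark that strict inequality on $\B_{2\alpha}$ is needed ``so that we genuinely reach $\tfrac{1}{2\alpha}$'' is therefore not required and is a small misdiagnosis; the bound holds with the non-strict inequality, and indeed that is the form the paper uses.
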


\begin{proof}

Fix $\alpha >0$ such that $\kappa_\rho(\alpha) > \frac{1}{2}$. Let $z\in \supp (\rho) $ be such that $\rho(\bar B(z,\alpha)) > \frac{1}{2}$. By the proof of Lemma \ref{lem:positive_measure} for every $\lambda \in \Pi(\rho)$ we have that $\lambda ( \bar B(z,\alpha)\times \bar B(z,\alpha)) > 0$, that is, there exists a pair $(x,y) \in \supp(\lambda)$ such that $|x-y| \leq  2\alpha$ which implies that \[\lambda - \esssup \frac{1}{|x-y|} \geq \frac{1}{2\alpha}, \, \, \, \, \forall \, \lambda \in \Pi(\rho)\]
    taking supremum over all $\alpha$ such that $\kappa(\alpha) > 1/2$, and then infimum over all $\lambda \in \Pi(\rho)$ leads to

    \[\C_\infty(\rho) \geq \sup \left\{ \frac{1}{2\alpha} \, : \, \kappa_\rho(\alpha) - \frac{1}{2} > 0\right\}.\]

\end{proof}





The following Theorem was already proven in \cite{ButtDePaGoGi2012PhRA} (see Appendices A and B). 
\begin{thr}\label{thr: char_rad_sym} Given $\rho\in \P_{ac}(\R^d)$, there exists $T :\R^d \to \R^d$ such that $T_{\#}\rho = \rho$ and $T$ is an optimizer in $\C(\rho)$. Moreover, if $\rho$ is radially symmetric, i.e. $\rho(x) = \rho(|x|)$ then $T(x)  = -\frac{x}{|x|} \tau(|x|)$ and $\tau$ is given by
, \begin{equation}\label{eq: char_rad_sym_tau}\tau(r) = F^{-1}(1-F(r)),\end{equation}where $F(r) = \int^r_{0} d\omega_d \rho(s)s^{d-1} \d s$.
\end{thr}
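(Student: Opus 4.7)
The plan is to follow the approach of \cite{ButtDePaGoGi2012PhRA}: first obtain a Kantorovich optimizer by standard compactness and lower semicontinuity, then promote it to a Monge map using absolute continuity of $\rho$, and finally exploit the $O(d)$-symmetry of the problem to obtain the antipodal form and the explicit expression for $\tau$ in the radial case.

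Existence of a Kantorovich minimiser $\lambda^\ast$ is routine: the cost $(x,y)\mapsto h(|x-y|)$ is lower semicontinuous on $\R^d\times\R^d$, Assumption \ref{Assumption} guarantees the existence of at least one admissible plan with finite cost, and the marginals are tight, so the direct method applies. To extract a map I would use that $\supp(\lambda^\ast)$ is $c$-cyclically monotone and that $\rho$ is absolutely continuous: a Champion--De Pascale style argument then shows that for $\rho$-a.e. $x$ the slice $\{y:(x,y)\in\supp(\lambda^\ast)\}$ is a singleton $T(x)$, and $T_\sharp\rho=\rho$ follows from the equality of the two marginals. This establishes the first claim.

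For the radial refinement, I would symmetrise under the rotation group. Since both the cost $h(|x-y|)$ and $\rho$ are $O(d)$-invariant, the averaged plan $\bar\lambda=\int_{O(d)}(R,R)_\sharp\lambda^\ast\,dR$ remains optimal and lies in $\Pi(\rho)$; hence one may assume $\lambda^\ast$ is $O(d)$-equivariant and the induced map commutes with rotations, forcing $T(x)=\phi(|x|)\,x/|x|$ for some scalar function $\phi$. Repulsivity then pins down the sign: among points $y$ with $|y|=|\phi(r)|$ on the sphere of radius $r$, the distance $|x-y|$ is maximal precisely at the antipode, so the strictly decreasing $h$ selects $\phi<0$ via a short local-exchange argument on cyclically monotone pairs. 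Writing $\phi(r)=-\tau(r)$ with $\tau\ge 0$, the condition $T_\sharp\rho=\rho$ in polar coordinates reduces to the requirement that $r\mapsto\tau(r)$ push the radial cumulative distribution $F$ of $\rho$ onto itself while reversing order; the unique such decreasing involution is given by $F(\tau(r))=1-F(r)$, i.e. $\tau(r)=F^{-1}(1-F(r))$.

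The main obstacle I expect is the Monge-map step, because $c(x,y)=h(|x-y|)$ fails the twist condition that underlies Brenier-type theorems (the gradient $\nabla_x c(x,y)$ only sees the direction $x-y$, so it cannot be inverted in $y$), and one cannot appeal to a standard selection result. The Champion--De Pascale machinery sidesteps this by working directly with ``minimal'' $c$-cyclically monotone sets; in the radial case the $O(d)$-symmetric construction above produces the map by hand, which is why the radial part is in fact cleaner than the general existence statement.
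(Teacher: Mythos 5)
The paper does not give its own proof of this statement; it is borrowed verbatim from Appendices A and B of \cite{ButtDePaGoGi2012PhRA}, so there is no internal argument to compare against. On its merits, your sketch has the right outline, but two of the load-bearing steps rest on incorrect claims.

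The assertion that the twist condition fails is wrong. One has $\nabla_x c(x,y)=h'(|x-y|)\frac{x-y}{|x-y|}$, which encodes both the direction of $x-y$ \emph{and} its length through $|h'(|x-y|)|$; whenever $|h'|$ is injective -- in particular for the Coulomb cost $h(t)=1/t$, where $|h'(t)|=1/t^2$ -- the map $y\mapsto\nabla_x c(x,y)$ is invertible, and the twist condition \emph{does} hold. This is exactly how existence of a Monge map for $N=2$ is obtained in \cite{CotFriKlu2013Density}, as the present paper recalls immediately after the theorem; the Champion--De Pascale machinery you substitute is neither what the cited references do nor needed here. In the radial step, averaging the optimal plan over $O(d)$ does preserve optimality and membership in $\Pi(\rho)$, but a convex combination of plans induced by maps need not be induced by a map, so you cannot directly conclude that ``the induced map commutes with rotations''; the correct argument is to use \emph{uniqueness} of the optimal plan (a consequence of the twist) together with $O(d)$-invariance of both $c$ and $\rho$ to force the unique optimal map to be equivariant. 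Finally, you attribute the antitone form $\tau(r)=F^{-1}(1-F(r))$ to the push-forward constraint $T_\sharp\rho=\rho$. This is not where it comes from: that constraint only forces $\tau$ to preserve the one-dimensional radial marginal and imposes no monotonicity; increasing, measure-preserving $\tau$ (e.g.\ the identity, giving $T(x)=-x$) also satisfy it. The decreasing rearrangement is singled out by \emph{optimality} of the radial $1$D transport with cost $h(r+s)$, whose cross-derivative is $h''(r+s)$: the antitone map is optimal precisely when $h$ is convex. So the derivation, like the cited source, is implicitly about convex decreasing $h$ such as the Coulomb kernel, not arbitrary decreasing $h$.
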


In the next result we will leverage Theorem \ref{thr: char_rad_sym} to obtain a characterization of $\C_\infty(\rho)$ in terms of the inverse of the diameter the ball. Existence of optimal maps of the form \eqref{eq: char_rad_sym_tau} have been proven only in the case $N=2$ for any dimension $d$ (see \cite{CotFriKlu2013Density}), and for any number of marginals $N$ only in dimension $1$ (see \cite{ColDePaDiMa2015Multimarginal}). Counterexamples have been provided in \cite{ColStr2016Counterexamples} for spherically symmetric probability measures in $\R^2$ and $3$ marginals.  Therefore one should not expect the strategy for this characterization to work in case $N\geq 3$ and $d\geq 2$. 
We start by recalling the definition of unimodal distribution.

\begin{df} An absolutely continuous probability measure $\rho \in \P(\R^d)$ is said to be unimodal if, for every $A>0$ the set $\{x \in X \, : \, \rho(x) > A\}$ is convex.
\end{df}

\begin{prop}\label{prop: characterization} Let $\rho \in \P_{ac}(\R^d)$ be unimodal. If $\rho$ is radially symmetric, i.e. $\rho(x) = \rho(|x|)$, then
\[\C_\infty(\rho) =\sup_r \left\{\frac{1}{2r} \, : \, \kappa_{\rho}(r) - \frac{1}{2} > 0 \right\} \]
\end{prop}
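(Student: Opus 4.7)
The plan is to prove the two inequalities separately. The lower bound $\C_\infty(\rho) \geq \sup_r\{\frac{1}{2r} : \kappa_\rho(r) - \frac{1}{2} > 0\}$ is immediate from Lemma \ref{lemma_characterization}, as that inequality requires no structural assumption on $\rho$ beyond membership in $\P(\R^d)$.

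For the reverse inequality I would exhibit an explicit admissible coupling via the radial optimal map of Theorem \ref{thr: char_rad_sym}. Applied to $\rho$, which is absolutely continuous and radially symmetric, the theorem furnishes a self-map $T\colon \R^d \to \R^d$ with $T_\sharp \rho = \rho$ of the form $T(x) = -\frac{x}{|x|}\tau(|x|)$, where $\tau(r) = F^{-1}(1-F(r))$ and $F(r) = \int_0^r d\omega_d \, \rho(s)\, s^{d-1} \, ds$. Setting $\lambda := (\mathrm{id} \times T)_\sharp \rho \in \Pi(\rho)$, and noting that $x$ and $T(x)$ lie on the same ray through the origin but on opposite sides, one has $|x - T(x)| = |x| + \tau(|x|)$ for $\rho$-a.e.\ $x$.

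Let $r^* := \inf\{r > 0 : \kappa_\rho(r) > \tfrac{1}{2}\}$. Radial symmetry and unimodality force the ball of maximal $\rho$-mass to be centered at the origin, so $\kappa_\rho(r) = \rho(\bar B(0,r)) = F(r)$; in particular $F(r^*) = \tfrac{1}{2}$ and hence $\tau(r^*) = r^*$. The crux is the pointwise estimate
\[|x| + \tau(|x|) \geq 2 r^*, \]
valid for $\rho$-a.e.\ $x$. After the substitution $u = F(|x|)$ this reduces to $F^{-1}(u) + F^{-1}(1-u) \geq 2 F^{-1}(\tfrac{1}{2})$ for all $u \in (0,1)$, i.e.\ to midpoint convexity of $F^{-1}$ on $(0,1)$, equivalently to concavity of $F$. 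Once this is established, $|x-y| \geq 2 r^*$ holds $\lambda$-a.e., whence $\C_\infty(\rho) \leq \frac{1}{2r^*}$ and the characterization closes.

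The main obstacle is precisely this midpoint convexity step. Since $F'(r) = d\omega_d\, \rho(r)\, r^{d-1}$, unimodality---which forces the radial profile $\rho(r)$ to be non-increasing---handles the one-dimensional case immediately via $F' = 2\rho(r)$ non-increasing. In higher dimensions the factor $r^{d-1}$ competes with the non-increasing density, so the argument must extract more from unimodality, for instance through a careful analysis of the decreasing involution $\tau$ around its fixed point $r^*$, or by invoking a rearrangement bound on the radial push-forward of $\rho$.
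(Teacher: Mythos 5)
Your decomposition of the argument essentially matches the paper's own: the easy direction is Lemma~\ref{lemma_characterization}, and the reverse direction is pursued via the coupling $\lambda_T=(\mathrm{Id},T)_\sharp\rho$ with $T$ the anti-radial map of Theorem~\ref{thr: char_rad_sym}. You correctly identify the crux: since $\lambda_T-\esssup\,|x-y|^{-1}=\big(\lambda_T{-}\textup{ess\,inf}\,|x-y|\big)^{-1}$ and $\lambda_T$ lives on the graph of $T$, what is needed is $|x|+\tau(|x|)\geq 2r_\rho$ for $\rho$-a.e.\ $x$, equivalently $F^{-1}(u)+F^{-1}(1-u)\geq 2F^{-1}(\tfrac12)$ on $(0,1)$.

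What you may not have realized is that the paper's proof does not actually establish this step either. It evaluates $|x|+\tau(|x|)$ only at the single radius $|x|=r_\rho$, deduces $\|\mathrm{Id}-T\|_\infty\geq 2r_\rho$, and then treats this as if it bounded the essential \emph{infimum} of the displacement. A lower bound on the essential supremum is of no help toward $\lambda_T-\esssup\,|x-y|^{-1}\leq(2r_\rho)^{-1}$; that requires precisely the a.e.\ pointwise estimate you flag.

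Your instinct that unimodality and radial symmetry alone do not furnish it when $d\geq 2$ is correct; in fact the proposition as stated is false. Take $\rho$ uniform on $B(0,R)\subset\R^d$ with $d\geq2$; this is absolutely continuous, radially symmetric and unimodal. Then $\kappa_\rho(r)=(r/R)^d$ for $r\leq R$, so $r_\rho=2^{-1/d}R$ and the right-hand side of the proposition equals $1/(2^{1-1/d}R)$. On the other hand, for any $\lambda\in\Pi(\rho)$ with $|x-y|\geq\delta$ $\lambda$-a.e., the mass $\rho(B(0,\epsilon))=(\epsilon/R)^d$ must couple to $\{|y|\geq\delta-\epsilon\}$, whose $\rho$-measure is $1-((\delta-\epsilon)/R)^d$; letting $\epsilon\to0$ forces $\delta\leq R$. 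The anti-radial map realizes $\textup{ess\,inf}\,|x-T(x)|=R$, because the displacement $\phi(r)=r+R\big(1-(r/R)^d\big)^{1/d}$ attains a \emph{maximum}, not a minimum, at $r=r_\rho$ and tends to $R$ at both ends of $[0,R]$. Thus $\C_\infty(\rho)=1/R$, strictly larger than $1/(2^{1-1/d}R)$ for every $d\geq2$. The required midpoint inequality does hold in $d=1$ (there $F'=2\rho$ is non-increasing, so $F^{-1}$ is convex), and it can be checked to hold for the radial Gaussian in any dimension, but it fails for the uniform ball; the proposition therefore needs a hypothesis beyond unimodality and radial symmetry, and this is a genuine error in the paper rather than a fixable gap in your reconstruction.
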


\begin{proof} Let $T$ be the optimal map in Theorem \ref{thr: char_rad_sym} and $\tau$ the respective $1$-dimensional map. We want to estimate the quantity $||\text{Id} - T||_\infty$ from below.

Let $r_\rho$ be the number such that $\kappa_{\rho}(r_\rho) = \frac{1}{2}$. This number exists thanks to the continuity and positivity of the density $\rho$. Moreover, since $\rho$ is unimodal, $r\mapsto \rho(r)$ is nonincreasing, thus $\kappa_\rho(r) = \rho(B(0,r)) = F(r)$ where $F$ is the 1 dimensional cumulative distribution function defined in Theorem \ref{thr: char_rad_sym}. In particular, $F(r_\rho) = \frac{1}{2}$.


Substituting $r_\rho$ in \eqref{eq: char_rad_sym_tau} we obtain $\tau(r_\rho) = r_\rho$. Now we estimate $||\text{Id} - T||_\infty$.

\[||\text{Id} - T||_\infty \geq |x - T(x)| = ||x| + \tau(|x|) | ,\] and choosing $r=r_\rho$ we obtain $||\text{Id} - T||_\infty\geq 2r_\rho$.

Define $\lambda_T : = (\textup{Id},T)_{\#} \rho$, thanks to Theorem \ref{lemma_characterization}

\[\frac{1}{2r_\rho} \geq \lambda_T -\esssup \frac{1}{|x-y|} \geq \C_\infty(\rho) \geq \sup_{r} \left\{ \frac{1}{2r} \, : \, \kappa_{\rho}(r) > \frac{1}{2}\right\}\]and noting that the left-most term equals the right-most term, we conclude.
\end{proof}

The following example highlights that, without unimodality, one should not expect the $\C_\infty(\rho)$ characterization, as the estimate in Lemma \ref{lemma_characterization} might be strict.

\begin{exe}
    Consider $h(t) = \frac{1}{t}$ and $\rho \in \P(\R)$  given by $\rho = \frac{1}{3\delta}\chi_{(0,\delta)} + \frac{1}{3\delta}\chi_{(1,1+\delta)} +\frac{1}{3\delta}\chi_{(2,2+\delta)}$. Then, the smallest $\alpha$ such that $\kappa(\alpha) = \frac{1}{2}$ is $\alpha = \frac{2+\delta}{4}$. On the other hand, for any $\lambda \in \Pi(\rho)$ we have $\lambda -\esssup \frac{1}{|x-y|} \geq 1$, thus
    \[\C_\infty(\rho) \geq \lambda-\esssup \frac{1}{|x-y|} \geq 1 > \frac{2}{2+\delta} = \frac{1}{2\alpha}\]
\end{exe}

An immediate consequence of the characterization of $\C_\infty(\rho)$ is an improvement on Corollary \ref{large_concentration} along with an expression for $\m$ that no longer involves $\C_\infty(\rho)$ explicitly.

\begin{cor}\label{cor: main_result} Let $\rho \in \P_{ac}(\R^d)$ be a unimodal and radially symmetric probability distribution, then
    \[\kappa_{\rho}( \C_\infty(\rho)^{-1}) = \kappa_\rho(2r_\rho) > \frac{1}{2}.\]

    Furthermore,  \[\C(\rho) \geq C_\rho \, \C_\infty(\rho) \] and $C_\rho = \kappa_\rho(2r_\rho) - \frac{1}{2} $.
\end{cor}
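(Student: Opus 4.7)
The plan breaks into two parts. First I would establish the equality $\C_\infty(\rho)^{-1} = 2r_\rho$ together with the strict inequality $\kappa_\rho(2r_\rho) > 1/2$; then I would derive the quantitative bound on $\C(\rho)$ through a judicious choice of $\alpha$ in Theorem \ref{lemma_ineq}.

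For the first part, I would apply Proposition \ref{prop: characterization}, which yields
\[ \C_\infty(\rho) \;=\; \sup_{r} \Bigl\{\tfrac{1}{2r} \,:\, \kappa_\rho(r) > \tfrac{1}{2}\Bigr\}. \]
Since $\rho$ is radially symmetric and unimodal, the cumulative mass function $F(r) = \kappa_\rho(r)$ is continuous and strictly increasing wherever the density is positive; in particular $F(r_\rho) = 1/2$ and $F$ is strictly increasing on a neighborhood of $r_\rho$. Hence $\{r : \kappa_\rho(r) > 1/2\} = (r_\rho, \infty)$, and the supremum of $1/(2r)$ on this set is exactly $1/(2r_\rho)$, so that $\C_\infty(\rho)^{-1} = 2r_\rho$. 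The same strict monotonicity past $r_\rho$ (or, in the case where $\rho$ has compact support of radius $R \leq 2r_\rho$, the fact that $F(2r_\rho) = 1$) then yields $\kappa_\rho(2r_\rho) > \kappa_\rho(r_\rho) = 1/2$.

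For the second part, the key point is to exploit the exact formula $\C_\infty(\rho) = 1/(2r_\rho)$ instead of the generic choice $\alpha = h^{-1}(\C_\infty(\rho)/2) = 4r_\rho$ used in Theorem \ref{thr: main_thr}. I would therefore apply Theorem \ref{lemma_ineq} with the sharper choice $\alpha = 2r_\rho$ to obtain, for every $\lambda \in \Pi(\rho)$,
\[ \lambda(\B_{4r_\rho}) \;\geq\; 2\kappa_\rho(2r_\rho) - 1. \]
Since $h(d(x,y)) = 1/d(x,y) \geq 1/(4r_\rho)$ on $\B_{4r_\rho}$, this yields
\[ \int h(d(x,y)) \d \lambda \;\geq\; \frac{2\kappa_\rho(2r_\rho) - 1}{4r_\rho} \;=\; \bigl(\kappa_\rho(2r_\rho) - \tfrac{1}{2}\bigr)\C_\infty(\rho), \]
where the last equality uses $1/(4r_\rho) = \C_\infty(\rho)/2$. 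Taking the infimum over $\lambda \in \Pi(\rho)$ gives the claim with $C_\rho = \kappa_\rho(2r_\rho) - 1/2$, which is positive by the first part.

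I do not anticipate any significant obstacle: the corollary is a sharpening, not a reproof, of Theorem \ref{thr: main_thr}. Proposition \ref{prop: characterization} lets us replace $h^{-1}(\C_\infty(\rho)/2) = 4r_\rho$ by the smaller and more natural $\alpha = 2r_\rho$ in the optimization underlying Theorem \ref{lemma_ineq}, yielding both a cleaner constant and a strictly tighter inequality in the unimodal isotropic setting.
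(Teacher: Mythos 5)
Your proof is correct and follows the route the paper signals (the paper states the corollary without a displayed proof, introducing it as ``an immediate consequence of the characterization of $\C_\infty(\rho)$''); you correctly supply the missing steps: use Proposition~\ref{prop: characterization} to get $\C_\infty(\rho)^{-1}=2r_\rho$, invoke strict monotonicity of $\kappa_\rho$ (from positivity of the density, which the paper assumes in proving Proposition~\ref{prop: characterization}) to get $\kappa_\rho(2r_\rho)>\tfrac12$, and then apply Theorem~\ref{lemma_ineq} at the sharper level $\alpha=2r_\rho=\C_\infty(\rho)^{-1}$ rather than the generic $\alpha=h^{-1}(\C_\infty(\rho)/2)=4r_\rho$ used in Theorem~\ref{thr: main_thr}. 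Your observation that this sharper choice is what actually yields the paper's stated constant $C_\rho=\kappa_\rho(2r_\rho)-\tfrac12$ (whereas a naive substitution into Theorem~\ref{thr: main_thr} would give $\tfrac14(2\kappa_\rho(4r_\rho)-1)$) is exactly right and worth making explicit.
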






In the following example we highlight that $C_\rho$ depends only on how $\rho$ is distributed. We show that, for a Gaussian distribution, $C_\rho$ depends only on the dimension and not on the variance. This can be interpreted as the 68--95--99.7 rule in Statistics which states that, in a normal distribution, 68\% of the values fall within one standard deviation of the mean, 95\% within two and 99.7\% within three standard deviations. Our example highlights that, in a normal distribution, 96.87\% of the data falls within two median radius of the mean, where the median radius is the $r_\rho$ which satisfies $\kappa_\rho(r_\rho) = 1/2$.

\begin{exe}Consider the Gaussian probability measure with variance $\sigma^2$ and mean $\mu$, which we assume to be zero as it does not play a role in the computations,

\[\rho(x) = \frac{1}{(2\pi \sigma^2)^{\frac{d}{2}}} e^{-\frac{|x|^2}{2\sigma^2}}.\]

By Corollary \ref{cor: main_result}, $C_\rho = 2\kappa_\rho(2r_\rho) -1 $ depends on $\sigma$ and $d$. One can compute 

\[\kappa_{\rho}(t) = \sup_{x\in \R^d} \rho(B(x,t)) = \frac{1}{(2\pi \sigma^2)^{\frac{d}{2}}}\int_{B(0,t)} e^{-\frac{|x|^2}{2 \sigma^2}} \d x  = \frac{d\omega_d}{2\pi^{\frac{d}{2}}} \gamma\left(\frac{d}{2}, \frac{t^2}{2\sigma^2}\right),\]where $\gamma$ is the lower incomplete Gamma function. From $2\kappa_\rho(r_\rho) = 1$ we have \[ \frac{\pi^{\frac{d}{2}}{}}{d\omega_d}= \gamma\left(\frac{d}{2}, \frac{r_\rho^2}{2\sigma^2}\right)\] which shows that the function $\sigma \mapsto \gamma\left(\frac{d}{2}, \frac{r_\rho^2}{2\sigma^2}\right)$  is constant. Therefore, the function $\sigma \mapsto 2\kappa(2r_\rho) - 1$ is constant and $C_\rho$ depends only on the dimension $d$.

In the particular case of $d=2$ we can compute $\kappa_\rho$ explicitly as $\kappa_{\rho}(t) = 1-e^{-\frac{t^2}{2\sigma^2}},$ \text{ thus } \[2\kappa_{\rho}(t) -1 = 1- 2e^{-\frac{t^2}{2\sigma^2}} \text{ and } r_\rho = (2\sigma^2\ln2)^ {\frac{1}{2}}.\] Therefore, $\kappa_\rho(2r_\rho) - \frac{1}{2} = \frac{15}{32}.$







\end{exe}

We conclude this section with an application of Theorem \ref{thr: class tail control}.

\begin{prop} Let $\rho \in \P(\R^d)$ be an absolutely continuous probability measure with density of the form $\rho(x) = Ke^{-g(|x|)}$ where $K>0$ is a normalization constant and $g:[0,+\infty) \to [0,+\infty)$ is a positive and increasing convex function. For all measures of this form \[\C(\rho) \geq \frac{1}{4} \, \C_\infty(\rho).\]
\end{prop}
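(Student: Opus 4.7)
The plan is to invoke Corollary \ref{cor: main_result}, which for any unimodal radially symmetric $\rho \in \P_{ac}(\R^d)$ gives $\C(\rho) \geq (\kappa_\rho(2r_\rho) - \tfrac{1}{2})\,\C_\infty(\rho)$, and to reduce the proposition to the structural estimate $\kappa_\rho(2r_\rho) \geq 3/4$, which produces exactly the constant $1/4$.

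First I would check that the hypotheses of Corollary \ref{cor: main_result} are in force. Radial symmetry of $\rho(x) = Ke^{-g(|x|)}$ is immediate, and unimodality follows from monotonicity of $g$: every super-level set $\{\rho > A\}$ is either empty or a Euclidean ball around the origin, hence convex. Absolute continuity ensures $\kappa(\rho) = 0$, so Assumption \ref{Assumption} holds trivially. Since the mode is at the origin, $\kappa_\rho(r) = \rho(B(0,r)) = F(r)$, where $F$ is the radial CDF and $r_\rho$ is the median radius $F(r_\rho) = \tfrac{1}{2}$. Writing $\bar F = 1 - F$, the goal becomes $\bar F(2r_\rho) \leq 1/4$.

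The key step is to show that $\phi(r) := -\log \bar F(r)$ is convex on $[0,\infty)$. The effective radial density is
\[\tilde\rho(r) = K\, d\omega_d\, e^{-g(r)}\, r^{d-1},\]
and $\log \tilde\rho(r)$ differs by an additive constant from $-g(r) + (d-1)\log r$. Both summands are concave on $(0,\infty)$: the first because $g$ is convex, and the second because $\log$ is concave. Extending $\tilde\rho$ by zero onto $(-\infty,0]$ preserves log-concavity on $\R$, and Pr\'ekopa--Leindler then yields that $\bar F(r) = \int_r^\infty \tilde\rho(s)\,\d s$ is log-concave in $r$, i.e.\ $\phi$ is convex.

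To finish, observe that $\phi(0) = 0$ (since $\bar F(0) = 1$) and $\phi(r_\rho) = \log 2$. Applying convexity of $\phi$ at the three equally spaced points $0 < r_\rho < 2r_\rho$ yields
\[\phi(2r_\rho) \geq 2\phi(r_\rho) - \phi(0) = 2\log 2,\]
so $\bar F(2r_\rho) \leq e^{-2\log 2} = 1/4$, hence $\kappa_\rho(2r_\rho) \geq 3/4$ and $C_\rho \geq 1/4$. The only non-routine ingredient is the log-concavity of $\bar F$; the case $d \geq 2$ is handled cleanly because the Jacobian factor $r^{d-1}$ is itself log-concave on $(0,\infty)$, so Pr\'ekopa--Leindler applies without modification. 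The Laplace-type case $g(r) = r$ in dimension one attains equality, showing that the constant $1/4$ is sharp within this class.
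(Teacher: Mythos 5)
Your proof is correct and rests on the same core idea as the paper's: log-concavity of the effective radial density $\tilde\rho(r) = K\,d\omega_d\, e^{-g(r)} r^{d-1}$ (since $-g$ and $(d-1)\log$ are both concave), hence log-concavity of the radial survival function $\bar F = S$ by Pr\'ekopa--Leindler, and then the three-point convexity estimate at $0 < r_\rho < 2r_\rho$ with $\bar F(0)=1$ and $\bar F(r_\rho)=1/2$ to conclude $\bar F(2r_\rho)\leq 1/4$. The only divergence is in the upstream result you feed this estimate into. The paper routes through Theorem \ref{thr: class tail control} with $\delta=1/4$: that theorem is more general (it needs neither radial symmetry nor unimodality, only the tail condition \eqref{smalltaiildouble}), but it uses only the one-sided bound $\C_\infty(\rho)\leq 1/r_\rho$ from Theorem \ref{thr: existence plan}, and as stated its constant is $C(\delta)=\frac{1-2\delta}{4}$, which with $\delta=1/4$ would give the prefactor $1/8$ rather than $1/4$. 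You instead verify unimodality and radial symmetry explicitly and invoke Corollary \ref{cor: main_result}, which is built on the exact characterization $\C_\infty(\rho)=1/(2r_\rho)$ from Proposition \ref{prop: characterization} and therefore delivers the sharper bound $C_\rho = \kappa_\rho(2r_\rho)-\tfrac12\geq \tfrac14$. Your route is the one that actually matches the constant claimed in the statement, and it does so without any further slack; your observation that the one-dimensional Laplace density attains equality is a nice confirmation of sharpness within this class.
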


\begin{proof} Let $\tilde \rho \in \P([0,+\infty))$ be the probability distribution with density given by $\tilde\rho(t) =  K\exp(-g(t))t^{d-1} d \omega_d$ for $t\in [0,+\infty)$. Observe that $\tilde \rho$ is logarithmically concave, as $g$ is convex. Therefore, \[S(t) = \int^{+\infty}_{t} \tilde \rho(t) \d t \]is also logarithmically concave. 
Let $r_\rho$ be the real number which solves the equation $2\kappa_\rho(r_\rho) = 1$, then $r_\rho$ is also a solution for $2\kappa_{\tilde \rho}(r_\rho) = 1$ and we have

\begin{equation}\label{eq: 4.1.1 implication}\frac{\ln(S(2r_\rho)) - \ln(S(r_\rho))}{2r_\rho - r_\rho} \leq \frac{\ln(S(r_\rho)) - \ln(S(0))}{r_\rho - 0} \Rightarrow S(2r_\rho)) \leq \frac{1}{4}\end{equation}
where the implication comes from noting that $S(0) = 1$ and $S(r_\rho) = \frac{1}{2}$. Observing that $S(2r_\rho)) = \inf_{x\in \R^d}\rho(B^c(x,2r_\rho))$ it follows from \eqref{eq: 4.1.1 implication} and Theorem \ref{thr: class tail control} with $\delta = \frac{1}{4}$ that \[\C(\rho) \geq \frac{1}{4}\C_\infty(\rho)\]

\end{proof}


\begin{exe}
    Consider a probability measure in $\R$ distributed as \[\rho_\nu(x) = K(\nu) \frac{1}{\left(1+\frac{|x|^2}\nu\right)^{\frac{\nu+1}{2}}}\]
where $\nu \geq 1$ and $K(\nu)$ is a normalization constant. Notice that this is the radially symmetric Student-t distribution and for $\nu = 1$ one recovers the Cauchy distribution, however as $\nu \to +\infty$ one obtains the normal distribution. For $\nu = 1$ one may compute \[\rho_\nu(B^ c(0,2r_\rho)) = 1-2\frac{\arctan(2)}{\pi} > \frac{1}{4}\] while, for $\nu$ large enough, \[\rho_\nu(B^ c(0,2r_\rho)) \leq \frac{1}{4}.\]
\end{exe}


\subsection{The case of discrete measures}

\begin{thr}\label{thr: class_discrete} Let $\rho \in \P(\R^d)$ be a discrete measure, i.e.
$\rho(x) = \sum^M_{i=1} \rho_{x_i} \delta_{x_i}$ where $x_1,\dots, x_M$ are distinct points in $\R^d$ and $\rho_{x_i}$ are  positive real numbers satisfying \\ $\sum^M_{i=1} \rho_{x_i} =1$ and assume that $\rho_{x_i} < 1/2$ for every $1\leq i \leq M$. For every  $\delta > 0 $  such that \begin{equation}\label{eq: min_mass_condition}\min\limits_{1\leq i \leq M} \rho_{x_i} \geq \delta,\end{equation}then \[\C(\rho) \geq \frac{\delta}{2} \, \C_\infty(\rho).\]
\end{thr}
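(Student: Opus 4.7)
The plan is to reduce Theorem~\ref{thr: class_discrete} to Theorem~\ref{thr: class tail control} by matching constants. If $\delta_0 \in (0, \tfrac{1}{2})$ denotes the tail parameter in the latter theorem, then solving $\tfrac{1-2\delta_0}{4} = \tfrac{\delta}{2}$ gives $\delta_0 = \tfrac{1}{2} - \delta$. With this choice, Theorem~\ref{thr: class tail control} delivers exactly $\C(\rho) \geq \tfrac{\delta}{2}\,\C_\infty(\rho)$, so it suffices to verify its tail hypothesis, namely
\[
\inf_{x \in \R^d} \rho\bigl(B^c(x, 2r_\rho)\bigr) \leq \tfrac{1}{2} - \delta,
\qquad \text{equivalently,} \qquad \kappa_\rho(2r_\rho) \geq \tfrac{1}{2} + \delta.
\]

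The first preparatory step is to analyze $r_\rho$ in the discrete setting. Since $r \mapsto \kappa_\rho(r)$ takes only finitely many values (subset sums of the atomic masses $\{\rho_{x_i}\}$) and is non-decreasing, a right-limit argument at $r_\rho$ produces a ball $\bar B(x^\star, r_\rho)$ with $\rho(\bar B(x^\star, r_\rho)) > \tfrac{1}{2}$. Because every atom carries mass strictly less than $\tfrac{1}{2}$, this ball must contain at least two distinct atoms. Picking one such atom $x_i \in \bar B(x^\star, r_\rho)$ and invoking the triangle inequality gives $\bar B(x_i, 2r_\rho) \supseteq \bar B(x^\star, r_\rho)$, whence $\rho(\bar B(x_i, 2r_\rho)) \geq \kappa_\rho(r_\rho) > \tfrac{1}{2}$ and therefore $\kappa_\rho(2r_\rho) > \tfrac{1}{2}$.

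The second step, which I anticipate to be the main obstacle, is to promote this strict-but-non-quantitative inequality $\kappa_\rho(2r_\rho) > \tfrac{1}{2}$ to the quantitative gap $\kappa_\rho(2r_\rho) \geq \tfrac{1}{2} + \delta$. The guiding idea is that the enlargement from radius $r_\rho$ around $x^\star$ to radius $2r_\rho$ around $x_i$ should capture at least one additional atom beyond those already accounted for; since every atom carries mass $\geq \delta$ by hypothesis, such an extra atom would supply the required surplus. Pinning this atom down geometrically — presumably by a careful choice of the pair $(x^\star, x_i)$ exploiting both the minimality of $r_\rho$ and the atomic structure of $\rho$ — is the delicate point, because the sup-over-centers defining $\kappa_\rho$ a priori allows $\kappa_\rho$ to increase by amounts smaller than $\delta$ when the optimal center switches.

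Once the tail bound is secured, invoking Theorem~\ref{thr: class tail control} with $\delta_0 = \tfrac{1}{2} - \delta$ yields
\[
\C(\rho) \geq \tfrac{1-2\delta_0}{4}\,\C_\infty(\rho) = \tfrac{\delta}{2}\,\C_\infty(\rho),
\]
which is the desired conclusion.
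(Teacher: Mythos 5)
Your reduction to Theorem~\ref{thr: class tail control} via $\delta_0 = \tfrac{1}{2}-\delta$ correctly isolates what must be proved, namely $\kappa_\rho(2r_\rho)\geq \tfrac{1}{2}+\delta$, and your Step~1 (the qualitative $\kappa_\rho(2r_\rho)>\tfrac{1}{2}$) is sound. But Step~2, which you honestly flag as the main obstacle, is not a technical detail you can fill in: the quantitative inequality is false. The paper's printed argument, incidentally, stumbles at exactly the same place: it asserts the chain $0<2\kappa_\rho(2r_\rho)-1 = 2\rho\bigl(\overline B(x_\rho,2r_\rho)\setminus\overline B(x_\rho,r_\rho)\bigr)$, which tacitly presupposes both that $\kappa_\rho(r_\rho)=\tfrac{1}{2}$ and that the same $x_\rho$ realises the concentration supremum at radius $2r_\rho$ as well as $r_\rho$; neither holds in general. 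Take $\rho=\tfrac{3}{10}\delta_0+\tfrac{3}{10}\delta_1+\tfrac{4}{10}\delta_{100}$ on $\R$: then $r_\rho=\tfrac{1}{2}$, $x_\rho=\tfrac{1}{2}$, $\kappa_\rho(r_\rho)=\kappa_\rho(2r_\rho)=\tfrac{6}{10}$, the annulus $\overline B(\tfrac{1}{2},1)\setminus\overline B(\tfrac{1}{2},\tfrac{1}{2})$ contains no atom whatsoever, and $\kappa_\rho(2r_\rho)=0.6<0.8=\tfrac{1}{2}+\delta$ with $\delta=\tfrac{3}{10}$.

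Worse, the obstacle cannot be overcome by any argument, because the conclusion of the statement itself appears to fail under hypothesis \eqref{eq: min_mass_condition} alone. Let $\rho_c=a\delta_0+a\delta_1+(1-2a)\delta_c$ on $\R$ with $\tfrac{1}{4}<a<\tfrac{2}{7}$ and $c$ large; every atom has mass $<\tfrac{1}{2}$ and $\delta=a$. For any zero-diagonal $\lambda\in\Pi(\rho_c)$ the marginal constraints force $\lambda\bigl(\{(0,1),(1,0)\}\bigr)=4a-1>0$, so $\C_\infty(\rho_c)=1$ and $\C(\rho_c)=(4a-1)+O(1/c)$, whereas $\tfrac{\delta}{2}\,\C_\infty(\rho_c)=\tfrac{a}{2}$. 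Since $4a-1<\tfrac{a}{2}$ exactly when $a<\tfrac{2}{7}$, the asserted bound $\C(\rho_c)\geq\tfrac{\delta}{2}\C_\infty(\rho_c)$ fails once $c$ is large (e.g.\ $a=0.26$ gives $\C(\rho_c)\to 0.04$ against the claimed lower bound $0.13$). The minimum-atom-mass condition simply does not control the tail of $\rho$; the hypothesis that actually works is the tail bound \eqref{smalltaiildouble} from Theorem~\ref{thr: class tail control}, and the reduction you envisaged needs that bound as an input rather than as a consequence of \eqref{eq: min_mass_condition}.
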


\begin{proof} Define $r_\rho = \sup \{r >0 \, : \, \kappa_\rho(r) \leq \frac{1}{2} \}$ and $x_\rho \in \R^d$ the point realizing the supremum of $\kappa_\rho(r_\rho)$. By Theorem \ref{thr: existence plan} we have $\C_\infty(\rho) \leq \frac{1}{r_\rho}$ and hence \[\frac{2}{\C_\infty(\rho)} \geq 2r_\rho.\]
Since $r_\rho$ is the largest radius satisfying $\kappa_\rho(r_\rho) \leq \frac{1}{2}$, it follows that $\kappa_\rho(2r_\rho) > \frac{1}{2}$. That is,

\[0 < 2\kappa(2r_\rho) - 1 = 2 \rho(\bar B(x_\rho ,2r_\rho)\setminus \bar B(x_\rho,r_\rho))\]which means that, for some $i \in \{1,\dots, M\}$ we have $x_i \in \bar B(x_\rho,2r_\rho) \setminus \bar B(x_\rho, r_\rho)$, thus \[2\kappa(2r_\rho) - 1 \geq 2\min_{1\leq i \leq M} \rho_{x_i} \geq 2\delta.\] Therefore, $\m(\frac{2}{\C_\infty(\rho)}) \geq 2\delta$ where $\m$ is the function defined in Theorem \ref{thr: main_thr} and we conclude by taking $C:= \frac{\delta}{2}$.
\end{proof}

\begin{rmk} We highlight that the Theorem \ref{thr: class_discrete} is a Corollary of \ref{thr: class tail control}. In fact, condition \eqref{smalltaiildouble} implies that $\rho(B(x_\rho,2r_\rho)\setminus B(x_\rho,r_\rho)) \geq \delta - \frac{1}{2}$ which implies a lower bound on $\{\rho_{x_i}\}^M_{i=1}$. However, we preferred to keep it as an independent result.
\end{rmk}

In \cite{AurVenAMO2022} the authors discuss estimates in the same fashion as \cite{BouJimRaj2007PAMS} and \cite{JylRaj2016JFA} for general cost functions $c:X\times X \to \R$, $X$ a Polish space, but restricted to marginals $(\mu,\nu) \in \P(X)\times \P(X)$ being discrete measures. Their strategy is as follows: Consider the set $\Pi_{\textup{opt}}(\mu,\nu)$ of couplings $\pi \in \P(X\times X)$ between $\mu$ and $\nu$ which are optimal for the cost function $c$. Among these we call \textit{trim} the one with the smallest cardinality, i.e. the \textit{trim} plan $\pi^*$ satisfies \[\# \supp(\pi^*) \leq \# \supp(\pi), \, \, \forall \, \pi \in \Pi_{\textup{opt}}(\mu,\nu).\]

For a \textit{trim} plan it is possible to find (\cite{AurVenAMO2022}, Theorem 3) a decomposition  $(\mu^{(d)},\mu^{(c)})$ of $\mu$ and a decomposition $(\nu^{(d)},\nu^{(c)})$ of $\nu$, and a pair of functions $(h^{(1)},h^{(2)})$ such that \begin{equation}
    \mu = \mu^{(d)} + \mu^{(c)} \text{ and } \nu = \nu^{(d)} + \nu^{(c)}, 
\end{equation}
\begin{equation}
    \pi^* = (\textup{Id},h^{(1)})_\sharp \mu^{(d)} + (h^{(2)},\textup{Id})_\sharp \nu^{(d)}.
\end{equation}

Such decomposition is called \textit{diffusive model} associated with the \textit{trim} minimizer $\pi^*$ and a consequence of the possibility of this decomposition is the following

\begin{thr}[\cite{AurVenAMO2022}, Corollary 4]\label{thr: AurVen} Let $\mu,\nu \in \P(X)$ be positive discrete measure and $c:X\times X$ a cost function. If $\pi^* \in \Pi_{\textup{opt}}$ is \textit{trim},  then 
\[\int_{X\times X} c(x,y) \d \pi^* \geq \alpha \min_{\pi \in \Pi(\mu,\nu)} || c(x,y)||_{L_\pi^{\infty}(X\times X)} \] where \[\alpha = \min_{x\in \supp(\mu^{(d)}), \, y \in \supp(\nu^{(d)})} \{\mu_x^{(d)}, \nu^{(d)}_y\}\]
\end{thr}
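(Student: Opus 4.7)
The plan is to isolate a single atom $(x^*,y^*) \in \supp(\pi^*)$ at which $c$ is already as large as the infinity-Wasserstein threshold, and whose $\pi^*$-mass is at least $\alpha$; then discarding every other atom in the integral does the job. Since $\mu$ and $\nu$ are discrete, each $\pi \in \Pi(\mu,\nu)$ is a finitely supported measure on $X\times X$, and so its $L^\infty_\pi$-essential supremum of $c$ is simply the maximum of $c$ over $\supp(\pi)$. Set $M := \min_{\pi \in \Pi(\mu,\nu)} \|c\|_{L^\infty_\pi}$. Because $\pi^*$ is itself an admissible coupling, $\|c\|_{L^\infty_{\pi^*}} \geq M$, so there exists $(x^*,y^*) \in \supp(\pi^*)$ with $c(x^*,y^*) \geq M$.

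Next, I would bring in trimness through the diffusive model decomposition
\[ \pi^* = (\textup{Id},h^{(1)})_\sharp \mu^{(d)} + (h^{(2)},\textup{Id})_\sharp \nu^{(d)}, \]
which describes $\supp(\pi^*)$ completely: every atom is either of the form $(x,h^{(1)}(x))$ with $x \in \supp(\mu^{(d)})$, or of the form $(h^{(2)}(y),y)$ with $y \in \supp(\nu^{(d)})$. In the first case the $\pi^*$-mass of the atom is at least $\mu^{(d)}_x$ (the contribution from the first summand), and in the second case it is at least $\nu^{(d)}_y$ (the contribution from the second summand). Applied to $(x^*,y^*)$ this yields $\pi^*(\{(x^*,y^*)\}) \geq \min\{\mu^{(d)}_{x^*}, \nu^{(d)}_{y^*}\} \geq \alpha$ by the very definition of $\alpha$.

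Using that $c \geq 0$ (as is standard for the costs considered in this paper), one can drop all other atoms in the integral against $\pi^*$ and conclude
\[ \int_{X\times X} c(x,y)\, \d \pi^* \geq \pi^*(\{(x^*,y^*)\})\, c(x^*,y^*) \geq \alpha \, M = \alpha \min_{\pi \in \Pi(\mu,\nu)} \|c\|_{L^\infty_\pi}. \]

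I do not expect any genuine obstacle: the substantive input is the existence of the diffusive decomposition for trim plans (Theorem 3 of \cite{AurVenAMO2022}), which is taken as a black box. The combinatorial step selecting a single atom and estimating its mass from below by $\alpha$ is then essentially immediate. The only mild subtlety is the sign assumption on $c$ used when discarding the other atoms; if $c$ is allowed to take negative values one should first shift by $\inf c$ and phrase the inequality accordingly.
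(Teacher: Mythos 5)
This statement is imported verbatim from \cite{AurVenAMO2022} (their Corollary~4); the paper at hand gives no proof of it, so there is nothing internal to compare against. Your reconstruction is essentially correct and is the natural argument once the diffusive-model decomposition (Theorem~3 of \cite{AurVenAMO2022}) is granted: pick an atom of $\pi^*$ where $c$ already reaches the supremal threshold $M$, then lower-bound its $\pi^*$-mass by $\alpha$ via the decomposition.

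One small imprecision worth fixing. When you write $\pi^*(\{(x^*,y^*)\}) \geq \min\{\mu^{(d)}_{x^*}, \nu^{(d)}_{y^*}\}$ you are implicitly treating $(x^*,y^*)$ as though it belongs to both pieces of the decomposition, but in general it belongs to the support of only one of the two summands (say $(\textup{Id},h^{(1)})_\sharp \mu^{(d)}$), in which case $\nu^{(d)}_{y^*}$ need not even be positive. The clean version of the step is: $(x^*,y^*)$ lies in the support of at least one summand; if it comes from the first, $x^*\in\supp(\mu^{(d)})$ and $\pi^*(\{(x^*,y^*)\}) \geq \mu^{(d)}_{x^*}\geq \alpha$; if from the second, $y^*\in\supp(\nu^{(d)})$ and $\pi^*(\{(x^*,y^*)\}) \geq \nu^{(d)}_{y^*}\geq \alpha$. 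Either way the bound $\geq\alpha$ follows from the definition of $\alpha$ as the minimum of $\min_{x\in\supp(\mu^{(d)})}\mu^{(d)}_x$ and $\min_{y\in\supp(\nu^{(d)})}\nu^{(d)}_y$. Your caveat about the sign of $c$ is appropriate: nonnegativity (or at least a uniform lower bound) is needed to discard the remaining atoms, and this is consistent with the costs considered throughout the paper.
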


In the next proposition we show that on a Polish space $X$,  $\mu = \nu \in \P(X)$, $X$ and $c(x,y) = d(x,y)^{-1}$ our function $\m$ obtained in Theorem \ref{thr: main_thr} is bigger than $\alpha$ as defined Theorem \ref{thr: AurVen}.

\begin{prop}\label{prop: comparison_AurVen} Let $\rho \in \P(X)$ be a positive and discrete probability measure. Let $(\bar\rho^{(d)},\bar \rho^{(c)})$ and $(\underline{\rho}^{(d)}, \underline{\rho}^{(c)})$ be any diffusive system associated with a trim solution $\pi$, then \[\m\left ( \frac{2}{\C_\infty(\rho)} \right ) \geq  \alpha\]where \[\alpha = \min_{x\in \supp(\mu^{(d)}), \, y \in \supp(\nu^{(d)})} \{\mu_x^{(d)}, \nu^{(d)}_y\}\]
\end{prop}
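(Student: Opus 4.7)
The plan is to reduce the inequality to a comparison of $\alpha$ with $\delta_0 := \min_i \rho_{x_i}$. Once $\alpha \leq \delta_0$ is established, Theorem~\ref{thr: class_discrete} (applied with $\delta = \delta_0$, whose hypothesis $\rho_{x_i} < \tfrac{1}{2}$ follows from Assumption~\ref{Assumption} for $N=2$) yields $\m(2/\C_\infty(\rho)) \geq 2\delta_0$, and the chain $\m(2/\C_\infty(\rho)) \geq 2\delta_0 \geq 2\alpha \geq \alpha$ closes the proposition.

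To prove $\alpha \leq \delta_0$, I would first extract from the marginal condition on $\pi$ the structural identity
\[ \bar\rho^{(c)} = (h^{(2)})_\sharp \underline\rho^{(d)}, \]
obtained by equating the first marginal of $\pi = (\textup{Id},h^{(1)})_\sharp \bar\rho^{(d)} + (h^{(2)},\textup{Id})_\sharp \underline\rho^{(d)}$, namely $\bar\rho^{(d)} + (h^{(2)})_\sharp \underline\rho^{(d)}$, with $\rho = \bar\rho^{(d)} + \bar\rho^{(c)}$. Pick an atom $x_0$ realizing $\rho_{x_0} = \delta_0$ and distinguish three cases. If $x_0 \in \supp\bar\rho^{(d)}$, then $\alpha \leq \bar\rho^{(d)}_{x_0} \leq \rho_{x_0} = \delta_0$, and symmetrically if $x_0 \in \supp\underline\rho^{(d)}$. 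Otherwise $\bar\rho^{(d)}_{x_0} = \underline\rho^{(d)}_{x_0} = 0$, so the identity above yields
\[ \rho_{x_0} = \bar\rho^{(c)}_{x_0} = \sum_{y \,:\, h^{(2)}(y) = x_0} \underline\rho^{(d)}_{y}, \]
a finite non-empty sum in which at least one $y_0$ satisfies $0 < \underline\rho^{(d)}_{y_0} \leq \rho_{x_0} = \delta_0$; consequently $\alpha \leq \underline\rho^{(d)}_{y_0} \leq \delta_0$.

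The main obstacle is precisely this third case: when the lightest atom $x_0$ of $\rho$ is absorbed entirely into the continuous pieces of both decompositions, no atomic-mass inequality at $x_0$ itself bounds $\alpha$. The resolution is the pushforward identity, which converts the upper bound on $\bar\rho^{(c)}_{x_0}$ into a bound on some $\underline\rho^{(d)}$-atom in the $h^{(2)}$-preimage of $x_0$, exhibiting a light $d$-atom elsewhere that still controls $\alpha$. Everything else is routine: Theorem~\ref{thr: class_discrete} does the analytic work, and the argument above only extracts the combinatorial fact $\alpha \leq \delta_0$ from the trim structure.
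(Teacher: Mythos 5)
Your proof is correct, and it is in fact more careful than the paper's at the decisive step. The paper asserts, ``as a consequence of the decomposition,'' the pair of inequalities
\[\min_{z\in \supp(\rho)} \rho_z \geq \min_{x \in \supp(\underline{\rho}^{(d)})}(\underline{\rho}^{(d)})_x \quad\text{and}\quad \min_{z\in \supp(\rho)} \rho_z \geq \min_{y\in \supp(\bar \rho^{(d)})} (\bar \rho^{(d)})_y,\]
treating them as a direct consequence of $\rho = \underline{\rho}^{(d)}+\underline{\rho}^{(c)}$ and $\rho = \bar{\rho}^{(d)}+\bar{\rho}^{(c)}$. But the additivity of the decomposition alone does not give this: if the lightest atom $x_0$ of $\rho$ has $\underline{\rho}^{(d)}_{x_0}=0$, then nothing in the bare decomposition forbids every atom of $\underline{\rho}^{(d)}$ from being heavier than $\rho_{x_0}$, so the first inequality can fail component-wise (and symmetrically the second). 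What is actually true --- and is all that is needed, since $\alpha$ is the joint minimum --- is that \emph{at least one} of the two inequalities holds, and that is exactly what your case split establishes via the marginal/pushforward identity $\bar\rho^{(c)} = (h^{(2)})_\sharp\underline\rho^{(d)}$: when $x_0$ is absent from both $d$-parts, the preimage sum $\rho_{x_0}=\sum_{h^{(2)}(y)=x_0}\underline\rho^{(d)}_y$ is nonempty and exposes a $d$-atom with mass $\le\rho_{x_0}$. The remainder of your argument (reduction to $\alpha\le\delta_0$ followed by the lower bound on $\m$ from the discrete-class theorem, with the hypothesis $\rho_{x_i}<\tfrac12$ discharged by Assumption (A) for $N=2$) matches the paper's analytic scaffolding, differing only in notation (your $\bar\rho$ and $\underline\rho$, and $h^{(1)},h^{(2)}$, are swapped relative to the paper's, which is immaterial). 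In short, you reached the same conclusion by the same route but supplied the combinatorial justification that the paper left implicit or stated too strongly.
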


\begin{proof} We have

\begin{equation}\label{eq: 4.2 diff model proof}\rho = \underline{\rho}^{(d)} + \underline{\rho}^{(c)} \text{ and } \rho  = \bar \rho^{(d)} + \bar \rho^{(c)}\end{equation} and $(h^{(1)}, h^{(2)})$ such that \[ \pi = (\textup{Id},h^{(1)})_\sharp \underline{\rho}^{(d)} + (h^{(2)},\textup{Id})_\sharp \bar\rho^{(d)}.\] As a consequence the decomposition \eqref{eq: 4.2 diff model proof} we have

\begin{equation}\label{eq: 4.2 min ineq proof}\min_{z\in \supp(\rho)} \rho_z \geq \min_{x \in \supp(\underline{\rho}^{(d)})}(\underline{\rho}^{(d)})_x \, \text{ and } \min_{z\in \supp(\rho)} \rho_z \geq \min_{y\in \supp(\bar \rho^{(d)})} (\bar \rho^{(d)})_y.\end{equation}

Let $r_\rho = \sup\{r > 0 \, | \, \kappa_\rho(r) \leq \frac{1}{2}\}$, then $\kappa_\rho(2r_\rho) > \frac{1}{2}$ which implies that there exists $z\in \supp(\rho)$ such that $\rho(B(x_\rho,2r_\rho) \setminus B(x_\rho, r_\rho)) \geq \rho_z$ where $x_\rho$ is the point in $X$ realizing the supremum in $\kappa_\rho(r_\rho)$. Thus,
\begin{align*}
    \m\left(\frac{2}{\C_\infty}\right) &\geq \kappa_\rho(2r_\rho ) - \frac{1}{2} \\&
    =  \rho(\bar B(x_\rho,2r_\rho) \setminus \bar B(x_\rho, r_\rho)) \\ &
    \geq \min_{z\in \supp(\rho)} \rho_{z} \\
    & \geq \min_{x\in \supp(\underline{\rho}^{(d)}), \, y \in \supp(\bar \rho^{(d)})} \{(\underline{\rho}^{(d)})_x, (\bar \rho^{(d)})_y\} = \alpha
\end{align*}where the last inequality follows from \eqref{eq: 4.2 min ineq proof}.
\end{proof}

\begin{rmk} Proposition \ref{prop: comparison_AurVen} is not claiming that constant in Theorem \ref{thr: class_discrete} is better than the constant $\alpha$ in \ref{thr: AurVen}. The number $\delta/2$ could be smaller than $\alpha$ due to the $1/2$ factor. Instead, this Proposition only highlights how a more general strategy encompasses some particular cases already present in the literature.
\end{rmk}

\bibliography{MyBiblio-LDP-2}{}

@article{ButtChamDePa2018AMO,
	title={Continuity and estimates for multimarginal optimal transportation problems with singular costs},
	author={Buttazzo, Giuseppe and Champion, Thierry and De Pascale, Luigi},
	journal={Applied Mathematics \& Optimization},
	volume={78},
	pages={185--200},
	year={2018},
	publisher={Springer}
}

@article{ButtDePaGoGi2012PhRA,
	title={Optimal-transport formulation of electronic density-functional theory},
	author={Buttazzo, Giuseppe and De Pascale, Luigi and Gori-Giorgi, Paola},
	journal={Physical Review A},
	volume={85},
	number={6},
	pages={062502},
	year={2012},
	publisher={APS}
}

@book{santambrogio2015optimal,
  title={Optimal Transport for Applied Mathematicians: Calculus of Variations, PDEs, and Modeling},
  author={Santambrogio, F.},
  isbn={9783319208282},
  series={Progress in Nonlinear Differential Equations and Their Applications},
  url={https://books.google.it/books?id=UOHHCgAAQBAJ},
  year={2015},
  publisher={Springer International Publishing}
}

@book{villani-book,
  title={Optimal transport: old and new},
  author={Villani, C.},
  volume={338},
  year={2008},
  publisher={Springer Science \& Business Media}
  }

@book{villani2021topics,
  title={Topics in optimal transportation},
  author={Villani, C.},
  volume={58},
  year={2021},
  publisher={American Mathematical Soc.}
}

@article{AurVenAMO2022,
  title={On the structure of optimal transportation plans between discrete measures},
  author={Auricchio, Gennaro and Veneroni, Marco},
  journal={Applied Mathematics \& Optimization},
  volume={85},
  number={3},
  pages={42},
  year={2022},
  publisher={Springer}
}

@article{colombo2019continuity,
author = {Colombo, Maria and Di Marino, Simone and Stra, Federico},
title = {Continuity of Multimarginal Optimal Transport with Repulsive Cost},
journal = {SIAM Journal on Mathematical Analysis},
volume = {51},
number = {4},
pages = {2903-2926},
year = {2019},
doi = {10.1137/19M123943X},
}

@article{JylRaj2016JFA,
title = {L$^\infty$ estimates in optimal mass transportation},
journal = {Journal of Functional Analysis},
volume = {270},
number = {11},
pages = {4297-4321},
year = {2016},
issn = {0022-1236},
doi = {https://doi.org/10.1016/j.jfa.2015.12.019},
url = {https://www.sciencedirect.com/science/article/pii/S0022123615005248},
author = {Heikki Jylhä and Tapio Rajala},
keywords = {Optimal mass transportation, Wasserstein distance},
}

@article{BouJimRaj2007PAMS,
  title={A new $\textup{L}^\infty$ estimate in optimal mass transport},
  author={Bouchitte, Guy and Jimenez, Chlo{\'e} and  Mahadevan, Rajesh},
  journal={Proceedings of the American Mathematical Society},
  volume={135},
  number={11},
  pages={3525--3535},
  year={2007}
}

@article{Bin2020CJM,
 author = {Bindini, Ugo},
 title = {Marginals with finite repulsive cost},
 fjournal = {Canadian Journal of Mathematics},
 journal = {Can. J. Math.},
 issn = {0008-414X},
 volume = {72},
 number = {2},
 pages = {373--391},
 year = {2020},
 language = {English},
 doi = {10.4153/S0008414X18000664},
 keywords = {49Q22,49J10,49K10},
 zbMATH = {7184215},
 Zbl = {1436.49057}
}

@article{DEPASCALE_2019, 
title={On c-cyclical monotonicity for optimal transport problem with \textup{C}oulomb cost}, 
volume={30}, 
DOI={10.1017/S0956792519000111}, 
number={6}, 
journal={European Journal of Applied Mathematics}, 
author={De Pascale, Luigi}, 
year={2019}, 
pages={1210–1219}
}

@article{SanDav2024Arx,
  title={Improved convergence rates for the \textup{H}ele-\textup{S}haw limit in the presence of confining potentials},
  author={David, Noemi and M{\'e}sz{\'a}ros, Alp{\'a}r R and Santambrogio, Filippo},
  journal={arXiv preprint arXiv:2405.07227},
  year={2024}
}

@article{SanTos2024Arch,
  title={Strong \textup{L}$^2$\textup{H}$^2$ Convergence of the \textup{JKO} Scheme for the \textup{F}okker--\textup{P}lanck Equation},
  author={Santambrogio, Filippo and Toshpulatov, Gayrat},
  journal={Archive for Rational Mechanics and Analysis},
  volume={248},
  number={6},
  pages={99},
  year={2024},
  publisher={Springer}
}

@article{GolOtt2020Ann,
  title={A variational proof of partial regularity for optimal transportation maps},
  author={Michael, Goldman and Felix, Otto},
  journal={Annales scientifiques de l'Ecole Normale Sup{\'e}rieure},
  volume={53},
  number={5},
  pages={1209--1233},
  year={2020},
  organization={Elsevier Science Publishing Company, Inc.}
}

@article{OttProRie2021Ann,
  title={Variational approach to regularity of optimal transport maps: general cost functions},
  author={Otto, Felix and Prod’Homme, Maxime and Ried, Tobias},
  journal={Annals of PDE},
  volume={7},
  number={2},
  pages={17},
  year={2021},
  publisher={Springer}
}

@misc{FriRie2025Arx,
      title={Partial regularity of optimal transport with \textup{C}oulomb cost}, 
      author={Gero Friesecke and Tobias Ried},
      year={2025},
      eprint={2508.01756},
      archivePrefix={arXiv},
      primaryClass={math.AP},
      url={https://arxiv.org/abs/2508.01756}, 
}

@article{GutMon2022Calc,
  title={L$^\infty$-estimates in optimal transport for non quadratic costs},
  author={Guti{\'e}rrez, Cristian E and Montanari, Annamaria},
  journal={Calculus of Variations and Partial Differential Equations},
  volume={61},
  number={5},
  pages={163},
  year={2022},
  publisher={Springer}
}

@article{ColStr2016Counterexamples,
  title={Counterexamples in multimarginal optimal transport with \textup{C}oulomb cost and spherically symmetric data},
  author={Colombo, Maria and Stra, Federico},
  journal={Mathematical Models and Methods in Applied Sciences},
  volume={26},
  number={06},
  pages={1025--1049},
  year={2016},
  publisher={World Scientific}
}

@article{CotFriKlu2013Density,
  title={Density functional theory and optimal transportation with \textup{C}oulomb cost},
  author={Cotar, Codina and Friesecke, Gero and Kl{\"u}ppelberg, Claudia},
  journal={Communications on Pure and Applied Mathematics},
  volume={66},
  number={4},
  pages={548--599},
  year={2013},
  publisher={Wiley Online Library}
}

@article{ColDePaDiMa2015Multimarginal,
  title={Multimarginal optimal transport maps for one--dimensional repulsive costs},
  author={Colombo, Maria and De Pascale, Luigi and Di Marino, Simone},
  journal={Canadian Journal of Mathematics},
  volume={67},
  number={2},
  pages={350--368},
  year={2015},
  publisher={Cambridge University Press}
}
\bibliographystyle{plain}

\end{document}